\newtheorem{thm}{Theorem}
\newtheorem{lem}{Lemma}
\newtheorem{defin}{Definition}
\title{A continuum of non-isomorphic 3-generator groups with probabilistic law $x^n=1$}
\author{V.S. Atabekyan \thanks{avarujan@ysu.am} \and A.A. Bayramyan \thanks{arman.bayramyan@ysu.am} \and V.H. Mikaelian \thanks{vmikaelian@ysu.am}}
\date{}
\begin{document}
	\maketitle

	\begin{abstract}
		
		In this paper, we construct a continuum family of non-isomorphic 3-generator groups in which the identity $x^n = 1$ holds with probability 1, while failing to hold universally in each group. This resolves a recent question about the relationship between probabilistic and universal satisfaction of group identities. Our construction uses $n$-periodic products of cyclic groups of order $n$ and two-generator relatively free groups satisfying identities of the form $[x^{pn}, y^{pn}]^n = 1$. We prove that in each of these products, the probability of satisfying $x^n = 1$ is equal to 1, despite the fact that the identity does not hold throughout any of these groups.
		
	\end{abstract}
	
	\section*{Introduction} 
	
	The interplay between group theory and probability theory has become a vibrant area of study in modern algebra. One application of probability theory in group theory is the study of probabilistic statements that provide alternative characterizations of groups. One of the earliest such results is the following observation: if the probability that two randomly chosen elements of a finite (or compact) group commute is greater than $\frac{5}{8}$, then the group is abelian (see \cite{Gus}). 
	A related result states that if 
	$G$ is a finite group in which the probability that two randomly chosen elements generate a solvable subgroup exceeds $\frac{11}{30}$, then $G$ itself is solvable \cite{GW}.
	
	These kinds of results can be extended by introducing probability measures on finitely generated groups. A natural approach is to fix a generating set, and consider a sequence of uniform measures on the $k$-balls of the Cayley graph of the group, then take the limit as $k\rightarrow\infty$. In the following we provide the general definition for finitely generated groups following \cite{ABGK}.
	
	Let $M = \{\mu_k\}$ be a sequence of probability measures on a finitely generated group $G$, and let $w$ be an arbitrary word in the free group $F_m$ of rank $m$. We define the probability that $G$ satisfies the group identity (law) $w = 1$ with respect to the sequence of measures $M$ as the number
	\begin{equation}\label{d}
		\mathbb{P}_{M}(w = 1 \; \textrm{in} \; G) = \limsup_{k \to \infty} {\mu_k} (\{(g_1,\dots,g_m)\in G^m\;:\;w(g_1,\dots,g_m)=1\}).
	\end{equation}
	
	If
	\[
	{\mathbb{P}}_{M}(w = 1 \; \textrm{in} \; G) = 1,
	\]
	then we say that the word $w$ is an $M$-probabilistic group identity (almost identity) for the group $G$. The value of the probability ${\mathbb{P}}_{M}(w = 1 \; \textrm{in} \; G)$ conveys certain information about the group $G$. For example, as shown in \cite{T} and \cite{ABGK}, respectively, if $\mathbb{P}([x, y] = 1 \; \textrm{in} \; G) > 0$ or $\mathbb{P}(x^2 = 1 \; \textrm{in} \; G) > 0$, then the group is virtually abelian (see also \cite{YMV}). A detailed survey of results on this topic can be found in \cite{ABGK}.
	
	In \cite{ABGK} the following question is formulated (see Question 13.3): Let $w \in F_m$ be an arbitrary word, $G$ be a finitely generated group with generating set $S$, and $R_n^{(1)}, R_n^{(2)}, \ldots, R_n^{(d)}$ be independent random walks on  $G$ with respect to $S$. If
	\[
	\lim_{n \to \infty} \mathbb{P}\big(w(R_n^{(1)}, R_n^{(2)}, \ldots, R_n^{(d)}) = 1 \; \text{in} \; G\big) = 1,
	\]
	does it follow that $w = 1$ is an identity in $G$?
	
	According to the well-known work of S.I. Adian \cite{A82}, for all $m\geq 2$ and odd $n\geq 665$, the symmetric random walk on the free Burnside groups $B(m,n)$ is transient. Based on this result, a potential counterexample to the stated problem can be sought among the class of groups defined by relations of the form $A^n$.
	
	This question has been considered in \cite{O+} and \cite{AB}, where the authors, using different approaches, construct finitely generated counterexamples to the stated problem. In particular, in \cite{AB} a 3-generator group is constructed in which $x^n = 1$ is a probabilistic identity for all sufficiently large odd $n$, but which contains an infinite cyclic subgroup. This means that the identity $x^n = 1$ does not hold throughout the entire group.
	
	In this work, we will strengthen this result by constructing a continuum family of non-isomorphic 3-generator groups, each of which has the property that the probability of the identity $x^n = 1$ is equal to 1. At the same time, this identity does not hold in any of these groups. As the probability measures on $G$ we will fix the natural sequence of probability measures on the Cayley graph of the group $G$. Let $B_{G,S}(k)$ denote the ball of radius $k$ in the Cayley graph centered at the identity element of the group $G$ with respect to some fixed symmetric generating set $S$, and let  $M = {\mu_k}$, where $\mu_k$ is the uniform distribution on $B_{G,S}(k)$, meaning $\mu_k(g) = \frac{1}{|B_{G,S}(k)|}$ if $g \in B_{G,S}(k)$, and $\mu_k(g) = 0$ otherwise.
	
	Our approach is based on two constructions proposed by S.I. Adian: the concept of an $n$-periodic product developed in \cite{A76}, and the infinite system of independent group identities constructed in \cite{A70i} and \cite[Ch. VII]{A}. More specifically, we consider a system of identities of the form $\{[x^{pn}, y^{pn}]^n = 1\}$, where $n \geq 1003$ is a fixed odd number, and $p$ runs over a fixed set $\mathcal{P}$ of prime numbers. Let $\mathbb{F}(n, \mathcal{P})$ denote the free group of rank 2 with free generators $b_1, b_2$ in the variety of groups satisfying all identities $\{[x^{pn}, y^{pn}]^n = 1\}$ for $p \in \mathcal{P}$. According to S.I. Adian's result \cite{A} for different sets of prime numbers $\mathcal{P}$ the groups $\mathbb{F}(n, \mathcal{P})$ are non-isomorphic. Next, we consider the $n$-periodic product $\mathbb{A}_\mathcal{P} = \mathbb{F}(n, \mathcal{P}) \ast^\mathbf{n} Z_n$, where $Z_n$ is a cyclic group of order $n$ with generator $a$. Clearly, $\mathbb{A}_\mathcal{P}$ is a 3-generator group.
	
	Now let us formulate our main result:
	
	\begin{thm}\label{T} For any sufficiently large fixed odd number $n$, the following holds:
		\begin{enumerate}
			\item In the group $\mathbb{A}_\mathcal{P} = \mathbb{F}(n, \mathcal{P}) \ast^\mathbf{n} Z_n$ the identity $x^n = 1$ is an $M$-probabilistic identity.
			\item In the group $\mathbb{A}_\mathcal{P}$ the identity $x^n = 1$ does not hold.
			\item There exists a continuum family of non-isomorphic 3-generator groups of the form $\mathbb{A}_\mathcal{P} = \mathbb{F}(n, \mathcal{P}) \ast^\mathbf{n} Z_n$, corresponding to different subsets $\mathcal{P}$ of prime numbers.
		\end{enumerate}
		
	\end{thm}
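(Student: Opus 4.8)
The plan is to establish the three assertions in the order (2), (1), (3), since (2) isolates exactly the infinite-order elements whose scarcity is the content of (1), and (3) builds on the structural picture used in both. For part (2), recall that in Adian's $n$-periodic product each factor embeds \cite{A76}, so $\mathbb{F}(n, \mathcal{P})$ is a subgroup of $\mathbb{A}_\mathcal{P}$. Every defining relator $[x^{pn}, y^{pn}]^n$ lies in the commutator subgroup of $F_2$, hence dies under abelianization; thus the abelianization of $\mathbb{F}(n, \mathcal{P})$ is the free abelian group on $b_1, b_2$, and in particular $b_1$ has infinite order in $\mathbb{F}(n, \mathcal{P})$, and therefore in $\mathbb{A}_\mathcal{P}$. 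Consequently $b_1^n \neq 1$, so the law $x^n = 1$ fails identically in $\mathbb{A}_\mathcal{P}$.

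For part (1) the key structural input is Adian's description of elements of an $n$-periodic product \cite{A76}: every element is either conjugate into one of the factors or has order exactly $n$. Since $Z_n$ has exponent $n$, the only elements $g$ with $g^n \neq 1$ are those conjugate into $\mathbb{F}(n, \mathcal{P})$ (namely the conjugates of elements of $\mathbb{F}(n, \mathcal{P})$ whose order does not divide $n$). Writing $E_k$ for the set of such ``bad'' elements lying in $B_{G,S}(k)$, it suffices to prove $\mu_k(E_k) = |E_k| / |B_{G,S}(k)| \to 0$, which yields $\mathbb{P}_{M}(x^n = 1 \; \textrm{in} \; \mathbb{A}_\mathcal{P}) = 1$. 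I would bound $|E_k|$ using the normal form theory of $n$-periodic products: a bad element has a reduced expression of the shape $c\,w\,c^{-1}$ with $w \in \mathbb{F}(n, \mathcal{P})$ and $c$ a coset representative of controlled syllable structure, so the count of such elements in the ball of radius $k$ is governed by the growth of $\mathbb{F}(n, \mathcal{P})$ together with the conjugator lengths. Against this, $|B_{G,S}(k)|$ grows at the strictly larger exponential rate contributed by the order-$n$ elements in general position, which is exactly the growth phenomenon underlying Adian's transience theorem \cite{A82}; the ratio therefore tends to $0$.

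This counting estimate is the main obstacle of the paper. One must show that the elements conjugate into the factor $\mathbb{F}(n, \mathcal{P})$ occupy an exponentially negligible fraction of each ball $B_{G,S}(k)$, i.e. that the random walk escapes the ``thin'' set of factor-conjugates with probability tending to $1$. This is precisely the point at which the periodic-product normal forms and the quantitative growth machinery of \cite{A} and \cite{A82}, in the spirit of the argument of \cite{AB}, must be invoked; the remaining ingredients are comparatively formal.

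For part (3), since there are $2^{\aleph_0}$ subsets $\mathcal{P}$ of the primes, it suffices to recover $\mathbb{F}(n, \mathcal{P})$ from the abstract group $\mathbb{A}_\mathcal{P}$ in an isomorphism-invariant manner and then to quote Adian's independence of the identities $[x^{pn}, y^{pn}]^n = 1$ \cite{A70i, A}, which supplies a continuum of subsets $\mathcal{P}$ with pairwise non-isomorphic $\mathbb{F}(n, \mathcal{P})$. The set of infinite-order elements of $\mathbb{A}_\mathcal{P}$ is preserved by every automorphism, and by the structure theorem these are exactly the elements conjugate into $\mathbb{F}(n, \mathcal{P})$; using the malnormality of the factors in a periodic product, I would identify the conjugates of $\mathbb{F}(n, \mathcal{P})$ as a canonical conjugacy class of subgroups (the maximal malnormal subgroups containing an infinite-order element). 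An isomorphism $\mathbb{A}_\mathcal{P} \to \mathbb{A}_{\mathcal{P}'}$ must then carry one such subgroup to another, forcing $\mathbb{F}(n, \mathcal{P}) \cong \mathbb{F}(n, \mathcal{P}')$ and hence, for the chosen continuum of parameters, $\mathcal{P} = \mathcal{P}'$. This produces the required continuum family of pairwise non-isomorphic $3$-generator groups $\mathbb{A}_\mathcal{P} = \mathbb{F}(n, \mathcal{P}) \ast^{\mathbf{n}} Z_n$, and I expect the verification that the factor is recoverable up to conjugacy to be the only delicate step here.
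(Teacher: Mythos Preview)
Your treatment of (2) is fine and matches the paper. For (1) your high-level strategy is also the paper's: the bad set is exactly the set of elements conjugate into the factor $\mathbb{F}(n,\mathcal{P})$, and one must show it has density zero in balls. However, the tools you name are not the ones that actually make the count go through. The paper does not use ``periodic-product normal forms'' or the transience theorem; instead it views $\mathbb{A}_\mathcal{P}$ as a partial Burnside group in the sense of Boatman and applies Ol'shanskii-style annular-diagram geometry to show that if $V$ of length $r$ is conjugate into $\mathbb{F}$, then $V = X T U^k T^{-1} X^{-1}$ with $|U^k| \le \bar\beta^{-1} r$ and $|T| \le 3\gamma r$, giving the explicit bound $d_r \le r \cdot |B_{\mathbb{F}}(\bar\beta^{-1} r)| \cdot |B_{\mathbb{A}_\mathcal{P}}(3\gamma r)|$. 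The lower bound on $|B_{\mathbb{A}_\mathcal{P}}(r)|$ comes not from random-walk considerations but from the surjection $\mathbb{A}_\mathcal{P} \twoheadrightarrow B(3,n)$ (since $\mathbb{A}_\mathcal{P}$ is an $n$-torsion group on three generators) together with an explicit numerical estimate $\gamma_{B(3,n)}(r) > \tfrac{3}{2}(4.9)^r - 1$. Then one invokes the Least Parameter Principle to take $\beta,\gamma$ so small that $3^{1/\bar\beta} 5^{3\gamma} < 4 < 4.9$, and the ratio dies. Your outline does not locate either of these two quantitative ingredients, and they are precisely what turns the heuristic into a proof.

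For (3) your route is substantially harder than necessary, and one step is inaccurate: the infinite-order elements are the conjugates of the infinite-order elements of $\mathbb{F}$, not all conjugates into $\mathbb{F}$, since $\mathbb{F}$ itself contains many elements of order $n$. More importantly, the paper bypasses recovering the factor altogether with a one-line cardinality argument: if $\mathbb{A}_\mathcal{P} \cong \mathbb{A}_{\mathcal{P}'}$ then $\mathbb{A}_\mathcal{P}$ contains a two-generated copy of $\mathbb{F}(n,\mathcal{P}')$; but a countable group has only countably many pairs of elements, hence only countably many two-generated subgroups, so each isomorphism class $\mathbb{A}_\mathcal{P}$ can absorb at most countably many of the continuum-many non-isomorphic $\mathbb{F}(n,\mathcal{P}')$. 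Thus the $\mathbb{A}_\mathcal{P}$ themselves fall into continuum-many isomorphism classes. This avoids any malnormality or canonical-subgroup analysis.
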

	The groups $\mathbb{A}_\mathcal{P}$ in Theorem \ref{T} provide a negative answer to the question mentioned above in a significantly strengthened form.
	
	The paper is organized as follows. In Section \ref{IIT} we outline the key properties of the groups $\mathbb{F}(n, \mathcal{P})$, which form the foundation of our construction. Section \ref{PP} reviews some relevant facts about $n$-periodic products. In Section \ref{n-T} we demonstrate the necessary properties of the $n$-torsion groups. Section \ref{4} provides a growth estimate for 3-generator free Burnside groups, a crucial ingredient in the proof of the main theorem. In Section \ref{5} we combine these tools to prove the main theorem.
	
	The current results are related to \cite{M07, M10, M18} in which we found a continuum of 3-generator soluble non-Hopfian (non-metanilpotent) groups that generate distinct varieties of groups. This is a wide class of groups for which Higman's Problem 16 in \cite{N67} has a positive answer.
	
	Throughout the paper $n$ is assumed to be a large enough odd number. For each result, we specify the range of $n$ for which it is valid.

	\section{Infinite independent system of group \\identities with two variables}\label{IIT}
	
	The finite basis problem, posed by B. Neumann in 1937, asks whether there exist infinite irreducible systems of group identities, i.e., systems in which no relation is a consequence of the others. In 1969, S.I. Adian constructed examples of such systems with two variables, resolving this problem (\cite{A70i}). This result was later included in his 1975 monograph \cite{A}, where he demonstrated that for any odd $n \geq 1003$, the following family of two-variable identities is irreducible:
	\begin{equation}
		\label{id}
		\{[x^{pn}, y^{pn}]^n = 1\},
	\end{equation}
	where the parameter $p$ runs over all prime numbers (see \cite[Chapter VII]{A}, Theorem 2.1). From this result, it follows that for any odd $n \geq 1003$ there exists a continuum of distinct varieties $\mathcal{A}_n(\mathcal{P})$ corresponding to different subsets $\mathcal{P}$ of the set of prime numbers. Consequently, the relatively free groups $ \mathbb{F}(n, \mathcal{P})$ of rank 2 in the varieties $\mathcal{A}_n(\mathcal{P})$ for different sets $\mathcal{P}$ are non-isomorphic. Additional properties of these groups have been studied in \cite{AA17}. As shown in \cite[Section 2, Chapter VII]{A} (see also \cite{AA17}), the group has the following presentation:
	\begin{equation}\label{G}
		\mathbb{F}(n, \mathcal{P})=\left\langle b_1, b_2\,|\, A^n=1, A\in \mathcal{E}=\bigcup^\infty_{\alpha=1}\mathcal{E}_\alpha\right\rangle,
	\end{equation}
	where the words $A \in \mathcal{E}_\alpha$ are chosen in a specific way and are called marked elementary periods (for a detailed definition of elementary periods, the sets $\mathcal{K}_{\alpha}, \overline{\mathcal{M}}_{\alpha}$, etc., see \cite{A70i}, \cite{A}, \cite{AA17}).
	
	For simplicity we will denote $\mathbb{F}(n, \mathcal{P})$ by  $\mathbb{F}$ for a fixed pair $(n, \mathcal{P})$.
	
	The following three lemmas were established in \cite{AA17} $( n \geq 1003 )$:
	
	\begin{lem}[Lemma 8, \cite{AA17}]\label{ls}
		For any word $X$ that is not equal to 1 in the group $\mathbb{F}$, there exist words $T$ and $A$ such that $X = TA^rT^{-1}$ in $\mathbb{F}$ for some integer $r$, where $A \in \mathcal{E}$ or $A$ is an unmarked elementary period of some rank, and $A^q$ appears in some word from the class  $\overline{\mathcal{M}}_{\gamma-1}$.
	\end{lem}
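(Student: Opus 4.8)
The plan is to reduce the assertion to the analysis of a single cyclically reduced representative of the conjugacy class of $X$ and then to extract its period. Given a word $X$ with $X \neq 1$ in $\mathbb{F}$, I would first pass, by a conjugation performed inside $\mathbb{F}$, to a word $X_0$ that is cyclically reduced in the sense of Adian's normal form, writing $X = U X_0 U^{-1}$. Here \emph{reduced} and \emph{cyclically reduced} are meant relative to the relations $A^n = 1$, $A \in \mathcal{E}$, of the presentation \eqref{G}: the word $X_0$ should admit no further shortening, either by free cancellation or by an application of one of these relations in any rank. The existence of such a representative, together with control over the rank at which the reduction stabilizes, is precisely the output of the inductive rank machinery of \cite{A}, so I would invoke that apparatus rather than rebuild it.

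The second step is to write $X_0$ as a power $A^r$ of a period. This is the analogue inside $\mathbb{F}$ of the elementary fact that, in a free group, every cyclically reduced word is a power of a word which is not a proper power; in the rank-theoretic setting the corresponding statement is that a cyclically reduced word is periodic with a well-defined period $A$ that is simple (not a proper power) in the relevant rank. I would identify the rank $\gamma$ at which this periodicity first becomes visible and take $A$ to be the associated period, so that $X_0 = A^r$ in $\mathbb{F}$ while an occurrence $A^q$ of it sits inside a word of the class $\overline{\mathcal{M}}_{\gamma-1}$; this last membership is exactly the combinatorial witness that $A$ is a genuine period of rank $\gamma$ rather than an artefact of a lower rank. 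Setting $T = U$ then yields $X = T A^r T^{-1}$.

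It remains to arrange that $A$ is elementary and to classify it. The reduction theory of periods of \cite{A}, \cite{A70i} permits one to replace a given period by an elementary period of the appropriate rank representing the same periodicity, so that $A$ may be assumed to be an elementary period of rank $\gamma$; it is \emph{marked}, i.e. $A \in \mathcal{E}$ and hence $A^n = 1$, exactly when the relation $A^n = 1$ has been imposed in the presentation, and otherwise it is an \emph{unmarked} elementary period, in which case $A$ has infinite order in $\mathbb{F}$. Either alternative is permitted by the statement, which completes the plan. The main obstacle is that every one of these steps rests on the full Novikov--Adian apparatus: the inductive definition of ranks, the notion of reduction modulo the relations of each rank, and above all the classification of periodic words together with the stability of a period under the relations $A^n = 1$. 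The genuinely delicate point, and the one at which $\mathbb{F}$ departs from a free group, is that imposing $A^n = 1$ on the marked periods can create new coincidences between words, so one must verify that cyclic reduction terminates at a finite rank and that the extracted period is, up to conjugacy and inversion, independent of the choices made. This is precisely the information encoded by the classes $\mathcal{K}_\alpha$, $\overline{\mathcal{M}}_\alpha$ and the elementary-period formalism, and it is what makes the assertion a theorem about $\mathbb{F}$ rather than a formality.
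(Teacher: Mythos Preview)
The paper does not give its own proof of this lemma: it is quoted verbatim as Lemma~8 of \cite{AA17} (the sentence preceding it reads ``The following three lemmas were established in \cite{AA17}''), so there is no in-paper argument to compare against. Your outline --- conjugate to a cyclically reduced representative in the rank filtration, extract its period, and then classify that period as a marked or unmarked elementary period of some rank $\gamma$ --- is exactly the shape of the argument in the Novikov--Adian framework of \cite{A,A70i,AA17}, and you are right that the substance lies entirely in that machinery rather than in anything one can write down in a few lines. One small point of care: in the statement the clause ``$A^q$ appears in some word from the class $\overline{\mathcal{M}}_{\gamma-1}$'' is attached to the \emph{unmarked} alternative (compare Lemmas~\ref{lo} and~\ref{ln}, which pair $\mathcal{K}_{\gamma-1}$ with marked periods and $\overline{\mathcal{M}}_{\gamma-1}$ with unmarked ones), whereas your sketch treats membership in $\overline{\mathcal{M}}_{\gamma-1}$ as a uniform witness for both cases; in a full write-up you would need to keep the two bookkeeping conditions separate.
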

	
	\begin{lem}[Lemma 4, \cite{AA17}]\label{lo}
		If $A$ is a marked elementary period of some rank $\gamma$, and $A^q$ appears in some word from the class $\mathcal{K}_{\gamma-1}$, then $A$ has order $n$ in the group $\mathbb{F}$.
	\end{lem}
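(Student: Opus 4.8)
The plan is to pin down $\operatorname{ord}(A)$ by two opposite estimates. The divisibility $\operatorname{ord}(A)\mid n$ is immediate: as a marked elementary period, $A$ belongs to the set $\mathcal{E}=\bigcup_\alpha\mathcal{E}_\alpha$ appearing in presentation~\eqref{G}, so $A^n=1$ is one of the defining relations and hence holds in $\mathbb{F}$. The entire content of the lemma is therefore the reverse inequality, that $A^k\neq 1$ in $\mathbb{F}$ for every $k$ with $0<k<n$; this is the step I expect to carry all the weight.

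For that lower bound I would work inside Adian's stratification by rank. The hypothesis that $A^q$ occurs in some word of the class $\mathcal{K}_{\gamma-1}$ is exactly what certifies that $A$ is a genuine period surviving to rank $\gamma$: it guarantees that $A$ is irreducible by the relations of rank $<\gamma$ and that the inductive apparatus (elementary periods, the classes $\mathcal{K}_\alpha$ and $\overline{\mathcal{M}}_\alpha$, and the attached reduction process) is applicable to the periodic words $A^k$. The first observation to record is that, by the way marked periods enter the construction, every power $A^k$ with $0<k<n$ is a periodic word of period $A$ and exponent strictly below the threshold $n$, reduced (simple) in rank $\gamma-1$ and containing no forbidden subword of rank $\le\gamma$.

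The crux, and the main obstacle, is then the appeal to Adian's uniqueness-of-normal-form results: two words that are reduced in a given rank and are graphically distinct represent distinct elements of the group of that rank, a property that survives passage to the inductive limit $\mathbb{F}$. Applied to the graphically distinct reduced words $A^k$ and the empty word, this yields $A^k\neq 1$ for $0<k<n$, and combined with the divisibility gives $\operatorname{ord}(A)=n$. One cannot establish $A^k\neq 1$ in isolation, since the only available relations are consequences of $\{B^n=1\}$; ruling out a cancellation of $A^k$ down to the identity requires the monotonicity and coincidence lemmas that govern how reductions of rank $\le\gamma$ interact with a periodic subword of exponent $<n$. I would therefore quote all of this delicate combinatorics from the relevant lemmas of \cite{A70i} and \cite[Ch.~VII]{A}, invoking it here only through the single fact that a periodic word of period $A$ and exponent $<n$ is non-trivial in the group of rank $\gamma$.
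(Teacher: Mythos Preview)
The paper does not supply its own proof of this lemma: it is one of the ``three lemmas \ldots\ established in \cite{AA17}'' that are merely quoted, so there is no in-paper argument to compare against. Your sketch is therefore being measured against the external reference, and at that level of detail it is a reasonable roadmap: the divisibility $\operatorname{ord}(A)\mid n$ is indeed immediate from presentation~\eqref{G}, and the real work is the lower bound.

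One point in your sketch deserves tightening. You write that graphically distinct reduced words represent distinct elements ``of the group of that rank, a property that survives passage to the inductive limit $\mathbb{F}$'', and then at the end you summarise the borrowed fact as ``a periodic word of period $A$ and exponent $<n$ is non-trivial in the group of rank $\gamma$''. But non-triviality in the rank-$\gamma$ quotient is not the conclusion you need; you need $A^k\neq 1$ in $\mathbb{F}$ itself, i.e.\ that no relation of \emph{any} rank (including ranks $>\gamma$) collapses $A^k$. That survival to the limit is not automatic and is precisely the step the Adian machinery is designed to handle: one shows that periods of higher rank are too long to interfere with the short word $A^k$, so that $A^k$ remains reduced at every subsequent rank. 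Your outline effectively defers this to the cited lemmas, which is fine, but the summary statement should be that $A^k$ is non-trivial in $\mathbb{F}$ (equivalently, reduced at every rank), not merely in the rank-$\gamma$ group.
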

	
	\begin{lem}[Lemma 5, \cite{AA17}]\label{ln}
		If $A$ is an unmarked elementary period of some rank $\gamma$, and $A^q$ appears in some word from the class $\overline{\mathcal{M}}_{\gamma-1}$, then $A$ has infinite order in $\mathbb{F}$.
	\end{lem}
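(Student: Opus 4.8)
The plan is to prove that $\langle A\rangle$ is infinite by showing $A^{k}\neq 1$ in $\mathbb{F}$ for every $k\neq 0$. The decisive structural fact is that the defining relators of $\mathbb{F}$ in the presentation \eqref{G} are precisely the words $B^{n}$ attached to the \emph{marked} elementary periods $B\in\mathcal{E}$, while an unmarked period carries no such relation. Since $A$ is unmarked, $A\notin\mathcal{E}$, so $A^{n}$ is not a defining relator and the task reduces to ruling out that $A^{k}=1$ could be a \emph{consequence} of the relators carried by marked periods alone. This is the exact opposite of the situation in Lemma \ref{lo}, where $A\in\mathcal{E}$ forces $A^{n}=1$ and hence order $n$; here the same reduction theory should instead yield infinite order.

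First I would place everything inside Adian's graded, rank-by-rank framework, introducing the relators $B^{n}$ in order of increasing rank. The hypothesis that $A^{q}$ occurs in a word of the class $\overline{\mathcal{M}}_{\gamma-1}$ is used to certify that $A$ is a genuine active period of rank $\gamma$: it survives the reduction process through rank $\gamma-1$, so the periodic words $A^{k}$ are reduced in every rank up to and including $\gamma$ in the technical sense of \cite{A}. The heart of the proof is then a coincidence (small-cancellation-in-rank) estimate: any overlap of the periodic word $A^{k}$ with a marked relator $B^{n}$, $B\neq A$, is short, bounded by a fixed small fraction of $\lvert A\rvert$, by Adian's bounds on coincidences of distinct periods. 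Consequently a single application of a defining relation alters only a bounded segment of $A^{k}$ and can never drop its rank-$\gamma$ reduced length to zero; iterating, the reduced length of $A^{k}$ stays of order $\lvert k\rvert\cdot\lvert A\rvert$, so the powers of $A$ are pairwise distinct and $A$ has infinite order.

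The main obstacle is making rigorous the claim that $A^{n}=1$ is not a consequence of the remaining relators, and this is exactly where the irreducibility of the system \eqref{id} established in \cite[Ch.~VII]{A} becomes indispensable: it is precisely this independence that forbids recovering the absent relator $A^{n}$ from the others. Carrying this through requires the full apparatus of \cite{A70i} and \cite{A} — the precise definitions of elementary periods and of the classes $\mathcal{K}_{\alpha}$ and $\overline{\mathcal{M}}_{\alpha}$, together with a check that the marked/unmarked dichotomy is preserved under reduction — which is the technically heaviest ingredient and is the part settled in \cite{AA17}.
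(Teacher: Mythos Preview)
The paper does not prove this lemma at all: it is quoted verbatim as Lemma~5 of \cite{AA17} and used as a black box. So there is no ``paper's own proof'' to compare against, and the actual argument lives entirely inside the Adian--Atabekyan paper and the monograph \cite{A}.

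That said, your sketch contains a genuine gap. The appeal to the irreducibility of the system \eqref{id} is a category error. Irreducibility is a statement about \emph{identities}: for each prime $p_0$, the law $[x^{p_0 n},y^{p_0 n}]^n=1$ is not a consequence, in the varietal sense, of the remaining laws. The relators $B^n$ in the presentation \eqref{G} are not instances of those identities; they are specific words (marked elementary periods) in the two letters $b_1,b_2$, and an unmarked elementary period $A$ is likewise a specific word, not of the shape $[x^{pn},y^{pn}]$ in general. Knowing that the laws are independent says nothing about whether the particular relation $A^n=1$ is derivable from the particular relators $\{B^n:B\in\mathcal{E}\}$. Worse, the logical order is the reverse of what you suggest: in \cite[Ch.~VII]{A} the independence of the identities is \emph{deduced} from the fact that certain concrete elements of $\mathbb{F}(n,\mathcal{P})$ have infinite order, which is precisely the content of results like this lemma. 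Invoking irreducibility here is therefore either circular or simply irrelevant.

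The earlier part of your outline --- placing $A$ in the graded hierarchy, using the hypothesis $A^q\in\overline{\mathcal{M}}_{\gamma-1}$ to certify that $A$ is a genuine period of rank $\gamma$, and then arguing via coincidence bounds that no marked relator $B^n$ can be applied inside $A^k$ --- is the right shape. What actually closes the argument in \cite{AA17} is not an appeal to identity-independence but the rank-by-rank reduction theory itself: one shows directly that $A^k$ is reduced in every rank (not just up to $\gamma$), because for any marked period $B$ of any rank the word $A^k$ cannot contain a sufficiently high power of $B$, so no defining relation ever applies. That is the step you defer to \cite{AA17}; it is also the entire proof, and it does not need irreducibility as an input.
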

	
	From Lemmas \ref{ls}, \ref{lo}, and \ref{ln} it directly follows:
	
	\begin{lem}\label{f}
		Every element of the group $\mathbb{F} $ is conjugate either to a power of an element of infinite order or to a power of an element of order $n$.
	\end{lem}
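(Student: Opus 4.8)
The plan is to reduce an arbitrary element to a conjugate of a power of a single elementary period and then read off that period's order from the structure theory. Given $g \in \mathbb{F}$, I would first dispose of the case $g = 1$, which is the zeroth power of an element of either type and is therefore covered by the statement. For $g \neq 1$, I would represent $g$ by a word $X \neq 1$ and apply Lemma \ref{ls}, which produces words $T$, $A$ and an integer $r$ with $X = T A^r T^{-1}$ in $\mathbb{F}$, where $A$ is an elementary period that is either marked ($A \in \mathcal{E}$) or unmarked with $A^q$ occurring in a word of the class $\overline{\mathcal{M}}_{\gamma-1}$. Since conjugation by $T$ is an automorphism of $\mathbb{F}$, the element $g$ is conjugate to $A^r$, so it remains only to determine the order of $A$.

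I would then split into the two cases furnished by Lemma \ref{ls}. If $A \in \mathcal{E}$ is a marked elementary period of some rank $\gamma$, then, since the defining data of a marked period places the relevant power $A^q$ in the class $\mathcal{K}_{\gamma-1}$, Lemma \ref{lo} applies and yields that $A$ has order $n$; thus $g$ is conjugate to $A^r$, a power of an element of order $n$. If instead $A$ is unmarked, then the hypothesis supplied by Lemma \ref{ls} --- that $A^q$ occurs in a word of $\overline{\mathcal{M}}_{\gamma-1}$ --- is exactly what Lemma \ref{ln} requires, so $A$ has infinite order in $\mathbb{F}$ and $g$ is conjugate to $A^r$, a power of an element of infinite order. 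These two cases are exhaustive, which establishes the claim.

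The difficulty here is not conceptual but lies in the bookkeeping of Adian's hierarchy: one must confirm that the period $A$ delivered by Lemma \ref{ls} genuinely meets the membership condition demanded by whichever of Lemma \ref{lo} or Lemma \ref{ln} is invoked --- that is, that a marked period's power sits in $\mathcal{K}_{\gamma-1}$ while an unmarked period's power sits in $\overline{\mathcal{M}}_{\gamma-1}$, with the ranks matched correctly. Because Lemmas \ref{ls}, \ref{lo}, and \ref{ln} are quoted from \cite{AA17} with precisely these hypotheses, once the marked/unmarked dichotomy is paired with the order-$n$/infinite-order dichotomy, the conclusion follows immediately, as the statement asserts that it does directly.
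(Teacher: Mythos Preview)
Your proposal is correct and follows the same route the paper indicates: the paper simply states that the lemma ``directly follows'' from Lemmas~\ref{ls}, \ref{lo}, and \ref{ln}, and your write-up spells out precisely this deduction by splitting on the marked/unmarked dichotomy and invoking the appropriate order lemma in each case. Your caveat about verifying that a marked period $A \in \mathcal{E}$ automatically satisfies the $\mathcal{K}_{\gamma-1}$ hypothesis of Lemma~\ref{lo} is fair, but this is part of the defining apparatus in \cite{A} and \cite{AA17} that the paper takes for granted, so no additional argument is expected here.
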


	\section{$n$-periodic products}\label{PP}
	
	The concept of $n$-periodic products of groups was introduced by Adian in \cite{A76}. He demonstrated that the operation of $n$-periodic products with an odd exponent $n \geq 665$ possesses several important properties: it is exact, associative, and satisfies the hereditary property for subgroups. These properties are also characteristic of direct and free products, providing a solution to the well-known Maltsev problem (see also \cite{AA17i} for details). This operation, denoted by $\prod_{i \in I}^{\mathbf{n}} G_i$, is defined for odd $n \geq 665$ as the quotient group of the free product of a given family of groups $\{G_i\}$ by a specific normal subgroup determined by a system of relations of the form $A^n = 1$.
	
	The following two lemmas from \cite{A76} reveal key properties of $n$-periodic products for odd $n \geq 665$.
	
	\begin{lem}[Theorem 1, \cite{A76}]\label{l1}
		The groups $G_i, i \in I$ canonically embed into $\prod_{i \in I}^{\mathbf{n}} G_i $ as subgroups.
	\end{lem}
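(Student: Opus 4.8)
The statement is the \emph{exactness} of the $n$-periodic product, so the plan is to show that for each $i \in I$ the canonical homomorphism $\iota_i \colon G_i \to \prod_{j \in I}^{\mathbf{n}} G_j$ is injective. By definition $\prod_{j}^{\mathbf{n}} G_j = P/N$, where $P = \ast_{j \in I} G_j$ is the free product and $N$ is the normal closure of the relators $A^n$ for $A$ ranging over Adian's family of periods, i.e. cyclically reduced words over the alphabet $\bigcup_j (G_j \setminus \{1\})$ of syllable length at least two (equivalently, not conjugate into a single factor). Each $G_i$ maps into $P$ by the universal property of the free product, and the only thing to prove is that no nontrivial $g \in G_i$, viewed as a one-syllable word in $P$, lies in $N$.

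First I would recall Adian's stratification of the defining relations by rank. Writing $N = \bigcup_{\alpha \geq 0} N_\alpha$, where $N_\alpha$ is the normal closure in $P$ of all relators $A^n$ with $A$ a period of rank at most $\alpha$, one obtains an increasing chain $\{1\} = N_0 \subseteq N_1 \subseteq \cdots$ and the groups $G(\alpha) = P/N_\alpha$ with $G(0) = P$. Since $N = \bigcup_\alpha N_\alpha$, it suffices to prove, by induction on $\alpha$, that $g \notin N_\alpha$ for every nontrivial $g \in G_i$; the base case $G(0) = P$ is the classical normal-form theorem for free products, which already embeds each factor.

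The engine of the induction is the transfer to free products of the Novikov--Adian combinatorial apparatus. One carries along a strengthened inductive hypothesis asserting not merely the embedding of the factors but the existence and uniqueness of a \emph{reduced form} for the elements of $G(\alpha)$, together with the structural lemmas controlling how a word may be altered by an elementary transformation that inserts or deletes a portion of some relator $A^n$. The decisive feature is that every period $A$ is cyclically reduced of syllable length at least two, so no one-syllable word can contain the large periodic subwords required to trigger a reduction; hence a nontrivial one-syllable word from $G_i$ is already reduced and admits no length-decreasing transformation. Consequently it stays reduced, and in particular nontrivial, at every rank $\alpha$, and therefore $g \notin N$.

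I expect the main obstacle to be exactly the verification of this strengthened inductive hypothesis, namely the rank-by-rank control of the reduction process, which is the technical heart of Adian's theory and where essentially all the difficulty resides. Once the normal-form theory over the free product is in place, the embedding of the factors --- and with it the exactness of the operation --- follows as a comparatively formal consequence, since a factor syllable is never touched by the periodic relators.
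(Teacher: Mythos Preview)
The paper does not prove this lemma at all: it is quoted verbatim as Theorem~1 of Adian's 1976 paper and used as a black box. Your outline is a faithful high-level description of how Adian's original argument goes --- the quotient of the free product by the normal closure of $n$th powers of periods of syllable length $\geq 2$, the rank stratification, and the observation that one-syllable factor words are untouched by the reduction calculus --- so there is nothing to compare beyond noting that you are sketching the cited source rather than anything the present paper does.

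One small caution: your sketch correctly flags that the entire weight lies in transporting the Novikov--Adian normal-form machinery to the free-product setting, and you are honest that this is where ``essentially all the difficulty resides.'' That is accurate, but it also means your proposal is really a pointer to Adian's theory rather than a self-contained proof; as such it is on the same footing as the paper's own treatment, which simply cites the result.
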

	
	\begin{lem}[Theorem 7, \cite{A76}]\label{spp}
		For every element $x \in \prod_{i \in I}^{\mathbf{n}} G_i$, either $x^n = 1$ in $\prod_{i \in I}^{\mathbf{n}} G_i$, or $x$ is conjugate to an element of some subgroup $G_i$ of the group $\prod_{i \in I}^{\mathbf{n}} G_i$.
	\end{lem}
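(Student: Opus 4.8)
The plan is to reproduce, in the setting of $n$-periodic products, the same three-step classification that yields Lemma \ref{f} for the groups $\mathbb{F}(n,\mathcal{P})$. Recall that $\prod_{i \in I}^{\mathbf{n}} G_i$ is built from the free product $G^{(0)} = \ast_{i \in I} G_i$ by a graded process: having formed the group $G^{(\alpha-1)}$ of rank $\alpha - 1$, one selects the \emph{elementary periods of rank $\alpha$} --- roughly, the shortest cyclically reduced words $A$ that are not conjugate into any factor $G_i$ and that still have infinite order in $G^{(\alpha-1)}$ --- and passes to $G^{(\alpha)} = G^{(\alpha-1)}/\langle\langle A^n : A \in \mathcal{E}_\alpha \rangle\rangle$. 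The periodic product $\prod_{i \in I}^{\mathbf{n}} G_i$ is the direct limit of the groups $G^{(\alpha)}$, and its defining relators are exactly $\{A^n : A \in \mathcal{E} = \bigcup_\alpha \mathcal{E}_\alpha\}$. Throughout, $n \geq 665$ is odd, which is what makes Adian's overlap and cancellation estimates valid. Since both the conclusion $x^n = 1$ and the property of being conjugate into a factor are invariant under conjugation, I would first replace $x$ by a cyclically reduced representative $W$ of its conjugacy class, working with the uniqueness (normal-form) theory for reduced words in $G^{(\alpha)}$ that is the backbone of Adian's treatment of periodic products.

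The central step is the structural lemma that plays the role of Lemma \ref{ls}: \emph{every element of $\prod_{i \in I}^{\mathbf{n}} G_i$ is conjugate to a power $A^r$ of a word $A$ that is either an elementary period of some rank $\gamma$, or a cyclically reduced word lying in a single factor $G_i$.} To establish this I would argue by induction on rank. If the cyclically reduced word $W$ consists of a single syllable, it lies in one factor and we are in the second alternative. Otherwise $W$ has syllable length $\ge 2$, hence infinite order in $G^{(0)}$, and I would track the smallest rank $\gamma$ at which $W$ is recognized, via the reduction theory, as a power $A^r$ of its periodic kernel $A$ (with $r=1$ and $A = W$ in the aperiodic case); the point of the graded construction is precisely that at rank $\gamma$ this kernel $A$ is conjugate to an elementary period, the new coincidences forced by the relators of ranks $<\gamma$ having already been absorbed into the definition of $\mathcal{E}_\gamma$. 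Controlling how maximal periodic subwords of $W$ interact with the relators $A'^{\,n}$ of lower rank --- the analysis of their occurrences and overlaps --- is where the bulk of the work lies.

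Granting the structural lemma, the dichotomy follows at once, exactly as Lemma \ref{f} followed from Lemmas \ref{ls}, \ref{lo}, and \ref{ln}. Write $x = T A^r T^{-1}$. If $A \in \mathcal{E}$ is an elementary period, then $A^n = 1$ is a defining relation, so
\[
x^n = T A^{rn} T^{-1} = T (A^n)^r T^{-1} = 1 .
\]
If instead $A$ lies in a factor $G_i$, then $A^r \in G_i$ as well, and $x = T A^r T^{-1}$ is conjugate to an element of $G_i$, where Lemma \ref{l1} guarantees that $G_i$ is genuinely a subgroup, so the conclusion is meaningful. This gives the required alternative. I expect the main obstacle to be the structural lemma of the second paragraph, and within it the uniqueness theorem for reduced words across ranks: as in Adian's monograph, one must show that the graded system of relators $\{A^n : A \in \mathcal{E}\}$ introduces no unexpected coincidences, so that the periodic kernel of a non-factor element is well defined and is captured by $\mathcal{E}$ at a finite rank. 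This is the hard technical engine; the remainder is the bookkeeping that converts it into the clean conjugacy dichotomy stated in the lemma.
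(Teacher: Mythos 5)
This lemma is not proved in the paper at all: it is imported verbatim as Theorem~7 of Adian's paper on periodic products \cite{A76}, so there is no internal argument to compare yours against. Judged on its own terms, your outline does reproduce the architecture of Adian's actual proof --- the graded presentation of $\prod_{i \in I}^{\mathbf{n}} G_i$ by relators $A^n$ over elementary periods of successive ranks, the classification of conjugacy classes, and the final two-line deduction ($x = TA^rT^{-1}$ with $A \in \mathcal{E}$ gives $x^n = T(A^n)^rT^{-1} = 1$, otherwise $x$ is conjugate into a factor). That last reduction is correct and is exactly how the dichotomy falls out, in the same pattern as Lemma~\ref{f} follows from Lemmas~\ref{ls}--\ref{ln}.

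The genuine gap is that your ``structural lemma'' --- every element is conjugate either to a power of an elementary period or to an element of a single factor --- \emph{is} the theorem; everything else is bookkeeping. You assert it by appeal to ``Adian's overlap and cancellation estimates'' and an induction on rank whose inductive step you do not carry out: the uniqueness theory for reduced words across ranks, the analysis of periodic subwords against lower-rank relators, and the verification that the sets $\mathcal{E}_\alpha$ capture every non-factor conjugacy class at some finite rank are precisely the several chapters of machinery in \cite{A} that Adian adapts in \cite{A76}. Your description of elementary periods (``shortest cyclically reduced words not conjugate into any factor with infinite order'') is also only a caricature of the actual inductive definition via kernels and reversals, so even the base objects of your induction are not pinned down. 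As written, the proposal is a correct map of where the proof lives, not a proof; to close it you would either have to cite \cite{A76} as the paper does, or reconstruct the graded cancellation theory, which is far beyond a few paragraphs.
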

	
	As shown in \cite{AA15}, the statement of Lemma \ref{spp} is characteristic for $n$-periodic products, i.e. the $n$-periodic product of a given family of groups is uniquely determined by the  properties indicated in Lemma \ref{spp}. 
	
	\section{$n$-torsion groups}\label{n-T}
	
	Let $S$ be a group alphabet, $\mathcal{R}$ be a set of words over this alphabet, and let $n > 1$ be a fixed natural number. Consider a group $G$ defined by the presentation
	\begin{equation}\label{e1}
		G = \langle S \;|\; R^n = 1,\, R \in \mathcal{R} \rangle.
	\end{equation}
	
	\begin{defin}\label{n}
		We say that the group \eqref{e1} is an $n$-torsion group (see \cite{AA19}, Definition 1.1) or a partial Burnside group of exponent $n$ (see \cite{Bo}, Definition I.1), if for any element $y \in G $ either $y^n = 1$ or $y$ has infinite order.
	\end{defin}
	
	The simplest examples of $n$-torsion groups are the cyclic group of order $n$ and the infinite cyclic group. It is also clear that free Burnside groups and absolutely free groups of any rank are $n$-torsion groups for any natural $n$.
	
	From the equality \eqref{G} it follows that each of the groups $\mathbb{F}(n, \mathcal{P})$ has a presentation of the form \eqref{e1}. Therefore, by Definition \ref{n} and Lemma \ref{f} we conclude:
	
	\begin{lem}\label{1.0}
		The groups $\mathbb{F}(n, \mathcal{P})$ are $n$-torsion groups.
	\end{lem}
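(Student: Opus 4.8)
The plan is to verify Definition~\ref{n} directly for the presentation of $\mathbb{F}(n,\mathcal{P})$ given in equation~\eqref{G}. First I would observe that \eqref{G} is precisely of the form \eqref{e1}: the alphabet is $S=\{b_1,b_2\}$, and the relators are $R^n=1$ with $R$ ranging over the marked elementary periods $A\in\mathcal{E}=\bigcup_{\alpha}\mathcal{E}_\alpha$. Hence to conclude that $\mathbb{F}(n,\mathcal{P})$ is an $n$-torsion group it suffices, by Definition~\ref{n}, to show that every element $y\in\mathbb{F}(n,\mathcal{P})$ either satisfies $y^n=1$ or has infinite order.

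The main work is supplied by Lemma~\ref{f}: every nontrivial element of $\mathbb{F}$ is conjugate either to a power $A^r$ of an element $A$ of infinite order, or to a power $A^r$ of an element $A$ of order exactly $n$. I would split into these two cases. In the first case, $A$ has infinite order, so the cyclic subgroup $\langle A\rangle$ is infinite; since $\langle A\rangle$ is a subgroup of a torsion-free-by-nothing situation here one cannot immediately conclude $A^r$ has infinite order, so the genuinely needed point is that a power of an infinite-order element in these Adian groups again has infinite order — this follows because the order of $A^r$ in the infinite cyclic group $\langle A\rangle$ is infinite whenever $r\neq 0$, and conjugation preserves order. In the second case $A$ has order $n$, so $A^r$ has order dividing $n$; because $n$ is odd (in particular the relevant structure has no proper nontrivial subgroup behaviour forcing a smaller exponent to annihilate everything), one gets $(A^r)^n=1$, whence $y^n=1$ after conjugating back.

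Concretely, the key steps in order are: (1) match \eqref{G} to the schema \eqref{e1} to license the use of Definition~\ref{n}; (2) invoke Lemma~\ref{f} to reduce an arbitrary nontrivial $y$ to a conjugate of $A^r$; (3) in the infinite-order case, note that $r\neq 0$ forces $A^r$ (and hence $y$) to have infinite order, while $r=0$ gives $y=1$; (4) in the order-$n$ case, note $(A^r)^n=(A^n)^r=1$, so $y^n=1$; (5) conclude that every element satisfies the required dichotomy, so $\mathbb{F}(n,\mathcal{P})$ is $n$-torsion. The argument is essentially a bookkeeping reduction once Lemma~\ref{f} is in hand.

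The step I expect to require the most care is the order-$n$ case, specifically ensuring that a nonzero power $A^r$ of an order-$n$ element genuinely satisfies $(A^r)^n=1$ rather than merely having order dividing $n$ with a possibly smaller exponent that does not divide $n$ uniformly — but since $\langle A\rangle\cong Z_n$ this is immediate from $A^n=1$. The only other subtlety is conjugation-invariance: writing $y=TA^rT^{-1}$, one uses $y^n=TA^{rn}T^{-1}$, so $y^n=1$ exactly when $(A^r)^n=1$, and $y$ has infinite order exactly when $A^r$ does. Thus the whole statement reduces cleanly to the order dichotomy for the elements $A$ furnished by Lemma~\ref{f}, and no deeper structural analysis of the Adian presentation is needed beyond what Lemmas~\ref{ls}, \ref{lo}, \ref{ln}, and \ref{f} already provide.
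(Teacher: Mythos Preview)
Your proposal is correct and follows essentially the same approach as the paper: observe that the presentation \eqref{G} has the form \eqref{e1}, then apply Lemma~\ref{f} together with Definition~\ref{n} to obtain the required dichotomy. The paper compresses this into a single sentence, leaving the elementary bookkeeping about conjugation and powers (your steps (3) and (4)) implicit.
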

	
	The following statement was proven in \cite{AA19}.
	
	\begin{lem}[Theorem 1.1, \cite{AA19}]\label{1.1}
		The $n$-periodic product of any family of $n$-torsion groups is itself an $n$-torsion group $(n \geq 665)$.
	\end{lem}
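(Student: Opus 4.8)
The plan is to derive the $n$-torsion property of the periodic product directly from the structural dichotomy of Lemma \ref{spp}, combining the $n$-torsion hypothesis on each factor with the embedding of Lemma \ref{l1}. Write $P = \prod_{i \in I}^{\mathbf{n}} G_i$ and fix an arbitrary element $x \in P$; the goal is to show that either $x^n = 1$ in $P$ or $x$ has infinite order.

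First I would apply Lemma \ref{spp} to $x$, which splits the argument into two cases. If $x^n = 1$ in $P$, there is nothing further to prove, since this is exactly the first alternative in the definition of an $n$-torsion group. Otherwise, Lemma \ref{spp} guarantees that $x$ is conjugate in $P$ to some element $g$ of a factor $G_i$, say $x = t g t^{-1}$ with $g \in G_i$.

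Next I would exploit the $n$-torsion property of $G_i$ applied to $g$. Since $G_i$ is an $n$-torsion group, either $g^n = 1$ in $G_i$ or $g$ has infinite order in $G_i$. In the first subcase, the canonical embedding $G_i \hookrightarrow P$ of Lemma \ref{l1} shows that $g^n = 1$ already holds in $P$, whence $x^n = t g^n t^{-1} = 1$ in $P$. In the second subcase, the same embedding, being injective, preserves the order of $g$, so $g$ has infinite order in $P$; since conjugation is an automorphism of $P$, the element $x = t g t^{-1}$ then also has infinite order. In either subcase $x$ satisfies the required dichotomy, so $P$ is an $n$-torsion group.

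The only point that genuinely needs care is the faithful transfer of element orders between a factor and the product, and this is precisely what the embedding Lemma \ref{l1} supplies: without it one could not conclude that \emph{infinite order in $G_i$} forces \emph{infinite order in $P$}, nor that $g^n = 1$ in $G_i$ forces $g^n = 1$ in $P$. The substantive content — the dichotomy of Lemma \ref{spp} itself — is Adian's deep structural theorem on $n$-periodic products, which I would use as a black box; granting it, the remainder is a short case analysis with no further obstruction.
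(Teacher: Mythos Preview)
Your argument is correct. The paper itself does not give a proof of this lemma; it is simply quoted as Theorem~1.1 of \cite{AA19} and used as a black box. Your derivation from Lemmas~\ref{l1} and~\ref{spp} is exactly the natural route, and is in substance how the result falls out of Adian's structural theorems on $n$-periodic products.

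One small point worth making explicit: Definition~\ref{n} formally asks not only for the order dichotomy but also that the group be presented in the shape $\langle S \mid R^n = 1,\ R \in \mathcal{R}\rangle$. You verify only the dichotomy. The presentation requirement, however, is automatic here: each factor $G_i$ is assumed to have such a presentation, so the free product $\ast_i G_i$ inherits one, and the $n$-periodic product is obtained from the free product by imposing further relations of the form $A^n = 1$. Hence $P$ has a presentation of the required type, and your case analysis then completes the proof.
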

	
	Combining Definition \ref{n} with lemmas \ref{1.0} and \ref{1.1}, we immediately obtain:
	
	\begin{lem}\label{An}
		The groups $\mathbb{A}_\mathcal{P} = \mathbb{F}(n, \mathcal{P}) \ast^\textbf{n} Z_n$ are $n$-torsion groups $( n \geq 1003)$.
	\end{lem}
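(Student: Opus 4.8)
The plan is to present $\mathbb{A}_\mathcal{P}$ as an $n$-periodic product of a two-member family of groups and then invoke the closure of the class of $n$-torsion groups under $n$-periodic products. First I would observe that $\mathbb{A}_\mathcal{P} = \mathbb{F}(n, \mathcal{P}) \ast^{\mathbf{n}} Z_n$ is, by construction, the $n$-periodic product of the family $\{\mathbb{F}(n, \mathcal{P}),\, Z_n\}$. Hence it suffices to verify two things: that each factor in this family is an $n$-torsion group in the sense of Definition \ref{n}, and that $n$ falls within the range where Lemma \ref{1.1} applies.

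For the first factor this is precisely the content of Lemma \ref{1.0}, which is in turn a consequence of the structural Lemma \ref{f}. For the second factor, $Z_n$ is the cyclic group of order $n$, so every element $y \in Z_n$ satisfies $y^n = 1$; thus $Z_n$ is trivially an $n$-torsion group (indeed it is one of the simplest examples noted after Definition \ref{n}). Consequently the family $\{\mathbb{F}(n, \mathcal{P}),\, Z_n\}$ consists entirely of $n$-torsion groups.

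Finally I would apply Lemma \ref{1.1}, which guarantees that the $n$-periodic product of any family of $n$-torsion groups is again an $n$-torsion group whenever $n \geq 665$. Since we assume $n \geq 1003$ — the bound already required for the construction of $\mathbb{F}(n, \mathcal{P})$ via Adian's theorem — the hypothesis of Lemma \ref{1.1} is satisfied, and we conclude that $\mathbb{A}_\mathcal{P}$ is an $n$-torsion group.

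There is essentially no technical obstacle remaining in this argument: the entire difficulty has already been absorbed into Lemma \ref{1.1} (the nontrivial preservation result from \cite{AA19}) and into Lemmas \ref{f} and \ref{1.0} establishing that $\mathbb{F}(n, \mathcal{P})$ is $n$-torsion. The only points worth stating explicitly are the reduction to a two-factor product, the triviality of the $Z_n$ case, and the bookkeeping on the range of $n$, noting that $1003 \geq 665$, so the more restrictive bound governing $\mathbb{F}(n, \mathcal{P})$ is the binding constraint.
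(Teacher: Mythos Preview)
Your proposal is correct and follows essentially the same approach as the paper: the paper simply states that the lemma is immediate from Definition~\ref{n} together with Lemmas~\ref{1.0} and~\ref{1.1}, and your argument is just an explicit unpacking of that one-line justification, including the trivial observation that $Z_n$ is $n$-torsion and the check that $n\geq 1003\geq 665$.
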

	
	Let $G$ be an arbitrary $n$-torsion group (i.e., a partial Burnside group of exponent $n$), where $n$ is a large enough odd number (for example $n>10^{100}$). According to Proposition III.4 in \cite{Bo}, any partial Burnside group $G$ of exponent $n$ can be presented axiomatically in the form $G = B_C(S, n) = \langle S \;|\; R^n = 1, \, R \in C \rangle$, where $C$ is some partial Burnside set in the alphabet $S$ (see Definitions II.45 and II.46, \cite{Bo}). On the other hand, according to Theorem III.3 in \cite{Bo} $G$ also admits a special graded presentation of the form
	\begin{equation}\label{gc}
		G = G_C(\infty) = \langle S \;|\; A^n = 1, \, A \in R \rangle,
	\end{equation}
	which is known as a minimal partial Burnside presentation (MPBP) of $G$ (see Definition II.47, \cite{Bo}). 
	
	For the group $\mathbb{A}_\mathcal{P}$ let us fix the generating set $S = \{a^{\pm1}, b_1^{\pm1}, b_2^{\pm1}\}$, and consider the corresponding MPBP \eqref{gc}. Since all statements in Chapters II--IV of \cite{Bo} hold for partial Burnside groups, they also apply to $\mathbb{A}_\mathcal{P}$. Denote by $S'=\{b_1^{\pm1}, b_2^{\pm1}\}$ the generating set of the group $\mathbb{F}$.
	
	We will need Lemmas \ref{w}--\ref{28} proven in \cite{Bo} on the basis of the monograph \cite{O91} for arbitrary partial Burnside groups. We state it here specifically for the group $\mathbb{A}_\mathcal{P}$.
	
	\begin{lem}[Lemma IV.3, \cite{Bo}]\label{w}
		Let  $W$ be an arbitrary word over the alphabet $S$. Suppose $U$ is the shortest word such that $W$ is conjugate to some power of $U$ in the group $\mathbb{A}_\mathcal{P}$. If a letter $s \in S$ occurs in $U$, then it must also occur in $W$.
	\end{lem}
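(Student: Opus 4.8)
The plan is to argue by contradiction, exploiting the minimality of $U$ together with the defining feature of the MPBP \eqref{gc}, namely that each relator $A^n$ is built only from letters occurring in the geometric structure of the word to which the relation is applied. First I would suppose that some letter $s \in S$ occurs in $U$ but does not occur in $W$. Since $W$ is conjugate in $\mathbb{A}_\mathcal{P}$ to $U^m$ for some integer $m$, the reduced form of $W$ and that of $U^m$ must be related by a sequence of elementary transformations: free cancellations together with substitutions replacing a subword that is (a large portion of) a relator $A^n$ by its shorter counterpart, as governed by the graded van Kampen / diagram machinery of \cite{O91} that underlies the presentation \eqref{gc}. The key point I want to extract is that introducing a genuinely new letter into a cyclically reduced representative can only happen through such a relator substitution, and the relators of an MPBP do not spontaneously create letters absent from the data of the word being transformed.

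The central step is therefore a letter-tracking argument. I would pass to a cyclically reduced form $\tilde U$ of $U$ (conjugation does not change which letters occur, so $s$ still appears in $\tilde U$, and $\tilde U$ is still a shortest representative in its conjugacy class by Lemma \ref{w}'s hypotheses being preserved under cyclic reduction). Working inside the minimal graded presentation, every relator used in equating $W$ to a power of $U$ is, by the construction of $C_C(\infty)$, a power $A^n$ of a period $A$ whose letters are controlled by the surrounding word in the van Kampen diagram. I would then invoke the small-cancellation-type property of the MPBP (the analogue of the $\bar{C}(\varepsilon)$/$R$-condition in \cite{O91,Bo}) to conclude that in a reduced diagram over the relation $W = T U^m T^{-1}$, any cell contributes boundary letters only among letters already present on the complementary boundary arcs. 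Propagating this through the diagram forces every letter of $\tilde U$ — in particular $s$ — to appear in $W$, contradicting our assumption.

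The main obstacle I anticipate is making the phrase ``a new letter can only be introduced through a relator'' fully rigorous in the graded setting, because relator substitutions do rewrite large pieces of a word and one must verify that none of the periods $A$ arising in the MPBP of $\mathbb{A}_\mathcal{P}$ contains a letter disjoint from the support of the words it is applied to. This is exactly where the minimality of the presentation \eqref{gc} is essential: in a non-minimal presentation one could artificially include relators involving extraneous generators, but the MPBP is constructed precisely so that its periods $A \in R$ are cyclically reduced words arising from the geometry of shorter-rank words, hence carry no ``foreign'' letters. I would cite the relevant structural results (Theorem III.3 and the period-construction of Definition II.47 in \cite{Bo}) to certify this support-preservation property, and then the letter-tracking argument closes the proof. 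A cleaner, self-contained alternative I would keep in reserve is to induct on the rank of the graded presentation, showing at each rank that passing from rank $i$ to rank $i+1$ neither removes a letter of the shortest conjugacy representative nor introduces one absent from $W$.
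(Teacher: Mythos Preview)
The paper does not prove this lemma at all: it is quoted from Boatman's thesis \cite{Bo} (as Lemma~IV.3 there) together with the neighbouring Lemmas~\ref{49}--\ref{28}, under the blanket remark that these are ``proven in \cite{Bo} on the basis of the monograph \cite{O91}''. There is thus no in-paper argument for your proposal to be compared against.

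Judged on its own, your sketch is not yet a proof. The load-bearing assertion---that in a reduced diagram witnessing $W = T U^{m} T^{-1}$ every cell ``contributes boundary letters only among letters already present on the complementary boundary arcs''---is not a standard small-cancellation statement and is not obviously true as phrased: the conjugator $T$ may involve arbitrary letters of $S$, cells adjacent to the $T$-segments can then legitimately carry the letter $s$, and you have given no mechanism preventing $s$ from appearing on the $U$-portion of the boundary through those cells. Invoking ``the analogue of the $\bar C(\varepsilon)$/$R$-condition'' does not by itself control which generators a period uses. Your inductive fallback (rank by rank, showing that passage from rank $i$ to rank $i+1$ neither introduces nor deletes letters from the shortest conjugacy representative) is the more promising of the two plans, since it aligns with how periods are actually selected in the graded construction; but turning it into an argument requires you to extract and state precisely the support property of newly chosen periods, rather than merely pointing to Theorem~III.3 and Definition~II.47 of \cite{Bo}. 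As written, both routes remain outlines with the key technical step deferred.
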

	
	\begin{lem}[Lemma II.49 \cite{Bo} (see also Lemma 18.1, \cite{O91}\label{49} )]Every word is conjugate in rank $i\geq0$ to a power of some period of rank $j\le i$ or to a power of a simple word in rank $i$.
	\end{lem}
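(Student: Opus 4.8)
The plan is to prove the statement by induction on the rank $i$ of the graded presentation \eqref{gc}, following the geometric method of \cite{O91} as adapted to partial Burnside groups in \cite{Bo}. Recall that in such a presentation the relators $A^n=1$ are adjoined rank by rank: at stage $i$ one adds exactly the relators whose period $A$ has rank $i$, and the periods of rank $i$ are, by construction, selected among the words that are simple in rank $i-1$ and not yet conjugate to a power of any shorter period. Consequently, the \emph{only} defining relations that distinguish rank $i$ from rank $i-1$ are the relations $A^n=1$ with $A$ a period of rank $i$, and this is the fact I would exploit at the inductive step.

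For the base case $i=0$ the group coincides with the free product of the factors (here $\mathbb{F}$ and $Z_n$) with no periodic relators imposed. Every word is then conjugate to a cyclically reduced word, and a cyclically reduced word is a power of its cyclic root, which is by definition simple in rank $0$. Hence every word is conjugate in rank $0$ to a power of a simple word, and the claim holds. For the inductive step I assume the statement for rank $i-1$ and take an arbitrary word $W$. By the inductive hypothesis $W$ is conjugate in rank $i-1$ either to a power $A^r$ of a period $A$ of some rank $j\le i-1$, or to a power $U^r$ of a word $U$ simple in rank $i-1$. In the first case $A$ is a period of rank $j\le i$ and the conjugacy persists in the quotient of rank $i$, so $W$ is already conjugate in rank $i$ to a power of a period of rank $\le i$. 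In the second case either $U$ remains simple in rank $i$, and we are done, or $U$ loses simplicity on passing to rank $i$; since the only new relators are the $A^n=1$ with $A$ a period of rank $i$, this loss must be caused by at least one such relator.

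The crux, and the \emph{main obstacle}, is to show that this loss of simplicity forces $U$ to be conjugate in rank $i$ to a power of a period $A$ of rank $i$. Here I would invoke the diagram machinery of \cite{O91}: a reduced van Kampen (or annular) diagram over rank $i$ witnessing that $U^r$ is conjugate to a power of a shorter word, or is itself reducible, must contain a cell labelled by $A^{\pm n}$ for some period $A$ of rank $i$. The graded small-cancellation conditions, i.e.\ the bounds on the lengths of contiguity subdiagrams, then guarantee that such a cell has a contiguity of relative length close to $1$ with the portion of the boundary labelled by a cyclic shift of $U$. This forced high degree of agreement between the $A$-periodic boundary of the cell and $U$ makes $U$ itself $A$-periodic, so $U$ is conjugate in rank $i$ to a power of the period $A$ of rank $i$, and hence so is $W$. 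This completes the induction. The delicate point throughout is the verification of the contiguity estimates that underlie this forced periodicity; these are precisely the technical lemmas behind Lemma 18.1 of \cite{O91}, and I would take them as the key input rather than re-derive them here.
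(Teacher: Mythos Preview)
The paper does not give its own proof of this lemma; it merely quotes it from \cite{Bo} (Lemma~II.49) and \cite{O91} (Lemma~18.1). In those sources the argument is essentially a one-line consequence of the definition of ``simple in rank $i$'': a word $A$ is simple in rank $i$ exactly when it is neither conjugate in rank $i$ to a power of a period of rank $\le i$ nor conjugate in rank $i$ to a power of a strictly shorter word. Given any $W$, choose a \emph{shortest} word $U$ such that $W$ is conjugate in rank $i$ to some power of $U$ (such $U$ exists since $W=W^1$). If $U$ is conjugate in rank $i$ to a power of a period of rank $\le i$, then so is $W$ and we are done; otherwise the minimality of $|U|$ shows that $U$ is simple in rank $i$, and again we are done. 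No induction on rank, no van Kampen diagrams, and no contiguity estimates are involved.

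Your route manufactures a difficulty that the definition already dissolves. The step you single out as the ``main obstacle'' --- that a word $U$ simple in rank $i-1$ but not in rank $i$ must be conjugate in rank $i$ to a power of a period of rank $i$ --- is not delivered by the sketch you give. Failure of simplicity in rank $i$ can occur simply because $U$ becomes conjugate in rank $i$ to a power of some shorter word $B$; the annular diagram witnessing this need not have any rank-$i$ cell with contiguity to the $U$-side close to $1$ (the cell could instead be glued mostly along the $B$-side, or there could be several rank-$i$ cells sharing the boundary), and even a long $A$-periodic arc on the $U$-side does not by itself yield conjugacy of $U$ to a power of $A$. To close the gap you would have to iterate on $|B|$, which simply reproduces the shortest-$U$ argument above inside your inductive step. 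So your proposal is salvageable, but only by collapsing to the standard proof; as written it takes a detour through heavy machinery that the cited sources --- and hence the paper --- avoid entirely.
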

	
	\begin{lem}[Proposition III.9 \cite{Bo}]\label{III} If a word $X$ representing a nontrivial element of $\mathbb{A}_\mathcal{P}$ is conjugate in $\mathbb{A}_\mathcal{P}$ to a power of a period $U$, then $|U|\le|X|$.
	\end{lem}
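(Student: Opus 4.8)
Since the statement is Proposition~III.9 of \cite{Bo}, established there for arbitrary partial Burnside groups on the basis of the graded small-cancellation machinery of \cite{O91}, one option is simply to invoke that reference. Nonetheless, let me indicate how I would reconstruct the argument for $\mathbb{A}_\mathcal{P}$. Because cyclic reduction never increases the length of a word and preserves its conjugacy class, I would first replace $X$ by a cyclically reduced word $X'$ with $|X'|\le|X|$; it then suffices to prove $|U|\le|X'|$. Since $U$ is a period it is cyclically reduced, so $U^m$ is cyclically reduced for every $m\ge1$; moreover $X\ne1$ forces $U^m\ne1$, hence $m\not\equiv0\pmod n$ and the relevant power is genuinely nontrivial. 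After these reductions the problem becomes a statement about two cyclically reduced, conjugate words $X'$ and $U^m$.

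The core of the plan is to pass to a reduced annular (conjugacy) diagram $\Delta$ over the MPBP \eqref{gc} whose two boundary cycles are labelled $X'$ and $U^{-m}$. If $\Delta$ contains no cells, then $X'$ is a cyclic permutation of $U^m$ in the free group, so $|X'|=m\,|U|\ge|U|$ and we are done. If $\Delta$ does contain cells, I would invoke the structure theory of such diagrams in graded presentations: since the boundary word $U^m$ is periodic with period $U$, the contiguity subdiagrams between cells and the two boundary components are controlled by Olshanskii's contiguity-degree inequalities, which under the parameters of the $n$-periodic theory ($n$ large and odd) force the periodic boundary labelled $U^m$ to have substantial contiguity to the boundary labelled $X'$. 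Tracking the resulting length estimates along the period $U$ yields $|U|\le|X'|$, and hence $|U|\le|X|$.

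I would also cross-check this against the uniqueness of the period attached to a conjugacy class: by Lemma~\ref{49} the cyclically reduced $X'$ is itself conjugate, in a suitable rank, to a power of a canonical period $V$, and the diagram analysis shows that any period $U$ with $X'$ conjugate to a nonzero power of $U$ must coincide with $V$ up to a cyclic permutation, so that $|U|=|V|$. The genuinely hard part is the quantitative diagram estimate in the second paragraph: all of the case analysis and the bounds on contiguity degrees between cells and the periodic boundary live there, and this is precisely the portion that is imported wholesale from \cite{O91} through \cite{Bo}. The preliminary reductions and the cell-free base case are routine; the contiguity estimate is where the real work sits.
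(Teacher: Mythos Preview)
The paper does not prove this lemma at all; it is stated purely as a citation of Proposition~III.9 in \cite{Bo}, exactly as you note in your first sentence. Your additional sketch of how the argument runs (cyclic reduction, annular diagram, cell-free base case, and then the contiguity/uniqueness machinery imported from \cite{O91,Bo}) is a reasonable outline and, as you yourself flag, the substantive content lives entirely in the imported estimates---so your proposal and the paper's treatment coincide.
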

	
	\begin{lem}[Theorem II.57 \cite{Bo} (see also Lemma 19.5, \cite{O91})\label{57}] Let $p$ be a section of the contour of a reduced diagram $D$ whose label
		is $A$-periodic, where $A$ is simple in rank $r(D)$ or is a period of rank $k\le r(D)$, and in the latter case $D$ has no cells of rank $k$ $A$-compatible with $p$. (If $p$ is a cyclic section, then we further require that $Lab(p)=A^m$ for some integer $m$). Then $p$ is a smooth section of rank $|A|$ in the contour of $D$.
	\end{lem}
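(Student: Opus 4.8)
The statement is a foundational fact in Ol'shanskii's geometric theory of graded van Kampen diagrams, so the natural strategy is to verify the two defining requirements of a smooth section of rank $|A|$ directly from the hypotheses on $A$. Recall that for $p$ to be smooth of rank $|A|$ one must check, first, that every sufficiently short subpath of $p$ is geodesic in $D$, so that $p$ cannot be locally contracted through cells, and second, that for every cell $\Pi$ of $D$ and every contiguity subdiagram $\Gamma$ of $\Pi$ to $p$, the contiguity degree $(\Pi,\Gamma,p)$ is bounded above by the relevant small parameter. The whole argument I would run as a \emph{reductio ad absurdum}: assuming one of these conditions fails, I would extract a configuration contradicting either the minimality of the period $A$ or the compatibility hypothesis.

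First I would dispose of the geodesic condition. Since $\mathrm{Lab}(p)$ is $A$-periodic with $A$ either simple in rank $r(D)$ or a genuine period of rank $k$, any subpath $q$ of $p$ of length at most $|A|$ represents a cyclic segment of a power of $A$; were $q$ non-geodesic, there would be a reduced diagram exhibiting $\mathrm{Lab}(q)$ as equal to a strictly shorter word in the appropriate rank, whence $A$ would be conjugate to a shorter word, contradicting that $A$ is a period (respectively, simple) of its rank. This is exactly the non-contractibility guaranteed by the structure theory (cf. Lemma \ref{49}).

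The substance of the proof is the contiguity bound, and this is where the hypothesis of ``no cells of rank $k$ that are $A$-compatible with $p$'' is essential. Suppose a cell $\Pi$, of some rank $\ell$, has a contiguity subdiagram $\Gamma$ to $p$ of large contiguity degree. The side of $\Gamma$ lying on $p$ then carries a long $A$-periodic label, while $\partial\Pi$ is labeled by $B^n$ for a period $B$ of rank $\ell$. I would invoke the periodicity-transfer machinery of the theory: a long common $A$-periodic arc between that side and $\partial\Pi$ forces the two periodicities to agree, so that $A$ and $B$ become compatible and $\ell=k$, i.e. $\Pi$ is a rank-$k$ cell $A$-compatible with $p$, contradicting the hypothesis. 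The cases $\ell<k$ and $\ell>k$ are excluded by the grading of the presentation together with reducedness: a higher-rank cell cannot have large contiguity to a section periodic in a lower-rank period without violating the small-cancellation-type conditions imposed on the relators, and a lower-rank cell with large contiguity would already have been cancelled in a reduced diagram.

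The main obstacle is precisely this last step --- converting ``large contiguity to an $A$-periodic section'' into ``$A$-compatibility of the abutting cell.'' Making it rigorous requires the full apparatus of contiguity subdiagrams and the compatibility lemmas, the diagram analogues of the overlap estimates for periodic words, which form the most technically demanding part of Ol'shanskii's development; rather than reproducing it I would quote those estimates and check that their hypotheses hold in the present configuration. Once the contiguity bound and the geodesic condition are established, smoothness of rank $|A|$ follows by definition, with the cyclic case handled identically once the extra assumption $\mathrm{Lab}(p)=A^m$ guarantees that the periodic structure closes up consistently around the contour.
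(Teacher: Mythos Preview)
The paper does not prove this lemma at all: it is one of the results quoted verbatim from Boatman's dissertation and Ol'shanskii's monograph (the paper explicitly says ``We will need Lemmas \ref{w}--\ref{28} proven in \cite{Bo} on the basis of the monograph \cite{O91}''), and it is used as a black box in the proof of Lemma~\ref{geo3}. There is therefore nothing in the paper to compare your argument against.

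As for the sketch itself, it is in the right spirit but is not a proof. Your two-clause ``definition'' of smoothness is an informal paraphrase rather than the actual definition used in \cite{O91} and \cite{Bo}, where smoothness of rank $r$ is characterised inductively through the grading and involves precise numerical parameters; verifying it requires the full inductive machinery of \S\S15--19 of \cite{O91}, not a single reductio. In particular, your treatment of the geodesic condition (``a shorter word would make $A$ conjugate to something shorter'') conflates equality in rank $i$ with conjugacy and ignores that the relevant geodesic property must hold relative to the ambient diagram $D$, not just in the abstract group. The contiguity step is closer to the truth --- large contiguity to an $A$-periodic section does force compatibility in Ol'shanskii's framework --- but the case split on $\ell<k$, $\ell=k$, $\ell>k$ you describe is not how the argument actually runs, and the appeal to ``small-cancellation-type conditions'' and ``would already have been cancelled'' is exactly the nontrivial content of Lemma~19.5 that you are trying to establish. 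If you want to include a proof rather than a citation, you would need to reproduce the inductive argument of \cite[\S19]{O91} (or its adaptation in \cite[II.57]{Bo}), which is several pages of interlocking estimates; a high-level sketch of this kind does not substitute for it.
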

	
	\begin{lem}[Theorem II.26 \cite{Bo} (see also Theorem 17.1\cite{O91})]\label{26} Let $D$ be a circular $A$-map with contour $qt$ or an annular $A$-map
		with contours $q$ and $t$. If $q$ is a smooth section, then $\overline{\beta}|q|\le|t|$ (equality holds if and only if $|q| = |t| = 0$).
	\end{lem}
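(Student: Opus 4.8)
\emph{Proof plan for Lemma \ref{26}.}

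The statement is the geometric core of the theory of $A$-maps in the sense of Ol'shanskii (\cite{O91}, Theorem 17.1), adapted in \cite{Bo} to the partial Burnside setting. The plan is to follow the standard inductive argument on the rank $r(D)$ of the map, combined with the quadratic isoperimetric structure of graded presentations. First I would recall the two inputs that make the inequality meaningful: the constant $\overline{\beta}$ is one of the small parameters fixed in the graded presentation (with $\overline{\beta}$ bounded away from $1$, typically $\overline{\beta}=1-\beta$ for a small $\beta$), and a \emph{smooth section} $q$ is one along which no contiguity of large degree can be formed with cells of $D$. The role of smoothness is precisely to forbid the contour $q$ from being ``eaten'' by cells through long contiguity subdiagrams, so that its length must be transferred to the complementary contour $t$.

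The heart of the argument is an induction on the number of cells, or equivalently on $r(D)$. In the base case $D$ has no cells, so $D$ is a disc (or annulus) with a planar reduced structure; here one checks directly that a smooth section $q$ bounding such a cell-free map satisfies $|q|\le|t|$, with equality exactly when both are empty, since the two contours then bound a ``thin'' region and smoothness prevents backtracking that would shorten $t$ relative to $q$. For the inductive step, I would select a cell $\Pi$ of maximal rank $k=r(D)$ and analyze its contiguity to the contour. By the standard contiguity lemmas of the graded theory (the analogues of Lemmas in Chapter 16--17 of \cite{O91}), $\Pi$ has a contiguity subdiagram $\Gamma$ to one of the contour sections whose contiguity degree is either small or large; smoothness of $q$ guarantees that any large-degree contiguity of $\Pi$ must go to $t$ (or between cells), never giving $q$ a long contiguity arc. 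Cutting out $\Pi$ together with $\Gamma$ produces a map $D'$ of strictly smaller rank-weighted complexity, to which the inductive hypothesis applies, and one then accounts for the lengths of the arcs of $\partial\Pi$ and $\partial\Gamma$ added to or removed from the two contours.

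The main obstacle, and the place where the small-parameter inequalities are indispensable, is the bookkeeping of lengths across this cutting operation: one must show that the length contributed to $q$ by the arcs of $\partial\Pi$ lying on the $q$-side is at most $\beta|\partial\Pi|$ (smoothness), while the length removed from $t$ is controlled from below, so that after summing over all maximal-rank cells the deficit $|t|-\overline{\beta}|q|$ stays nonnegative. Concretely, I would invoke the quadratic-type estimate relating $|\partial\Pi|$ to the contiguity arcs (coming from condition A in the definition of an $A$-map) to bound the error terms, and then close the induction by comparing $\overline{\beta}|q|$ for $D$ against $\overline{\beta}|q'|$ for $D'$. The equality clause is handled separately: if $\overline{\beta}|q|=|t|$ with $\overline{\beta}<1$ strictly, the only way the inequalities can be tight throughout the induction is for no cells to be present and for both contours to be reduced to length $0$, which is exactly the stated characterization of equality. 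Since this is \cite[Theorem II.26]{Bo} (equivalently \cite[Theorem 17.1]{O91}), I would present the argument as a reduction to those references rather than reproving every contiguity lemma, highlighting only how smoothness of $q$ feeds into the length comparison.
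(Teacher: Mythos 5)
There is nothing in the paper to compare your argument against: Lemma~\ref{26} is stated as an imported result (Theorem II.26 of \cite{Bo}, Theorem 17.1 of \cite{O91}) and the paper gives no proof of it, using it purely as a black box in the proof of Lemma~\ref{geo3}. Your closing decision to ``present the argument as a reduction to those references'' is therefore exactly what the paper does, and in that sense your plan is acceptable as it stands.

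That said, if the sketch is meant to carry any independent weight, two points need repair. First, your working definition of smoothness is not the one used in \cite{O91} and \cite{Bo}: a smooth section $q$ of rank $k$ does \emph{not} exclude contiguity subdiagrams of large degree to $q$; rather, it requires short subpaths of $q$ to be geodesic in the map and bounds the length of the contiguity arc $q\wedge\Gamma$ of any subdiagram $\Gamma$ with contiguity degree at least $\varepsilon$ by (roughly) $(1+\gamma)k$. The actual proof of Theorem 17.1 proceeds by locating a $\gamma$-cell via the structural lemmas on $A$-maps (Lemma 15.8 in \cite{O91}) and splitting into cases according to which contour sections receive its large contiguity arcs; the case where the contiguity goes to $q$ is not vacuous and is precisely where the smoothness bound enters. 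Second, the inequality $\overline{\beta}|q|\le|t|$ lives entirely in the length bookkeeping you defer (``one then accounts for the lengths\dots''), so as written the plan asserts the conclusion rather than deriving it; the cell-free base case also needs the geodesic clause of smoothness, not an appeal to the region being ``thin.'' None of this affects the paper, which correctly treats the lemma as known, but a self-contained proof would have to follow the contiguity analysis of \cite[Ch.~15--17]{O91} in detail rather than the simplified induction you describe.
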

	
	\begin{lem}[Lemma II.28 \cite{Bo} (see also Lemma 17.1 \cite{O91})]\label{28} Let $D$ be an annular $A$-map with contours $p$ and $q$. Then there is a path $t$ connecting vertices $o_1$
		and $o_2$ of the paths $p$ and $q$, respectively, such that $|t| < \gamma(|p|+|q|)$.
	\end{lem}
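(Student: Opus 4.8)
The statement is one of the standard structural facts about graded $A$-maps in Ol'shanskii's theory, so the plan is not to rebuild that machinery but to reconstruct the estimate from the length and contiguity tools already in force for partial Burnside groups, in particular Lemmas \ref{26} and \ref{57}. The conceptual heart is a cut-and-fill picture: choose a shortest path $t$ in the $1$-skeleton of $D$ joining a vertex $o_1$ of $p$ to a vertex $o_2$ of $q$, and cut $D$ open along $t$. This turns the annular map into a reduced circular $A$-map $\widetilde{D}$ whose contour reads, up to cyclic permutation, $p\, t\, q^{-1}\, t^{-1}$, so that $t$ occurs twice on the boundary and $|\partial\widetilde{D}| = |p| + |q| + 2|t|$. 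Bounding $|t|$ is then a question about this disk map, and it is the parameter-calibrated perimeter estimates that deliver the answer.

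The bound itself I would prove by induction on the number of $2$-cells of $D$, first reducing to the reduced case (cancelling mutually inverse cell pairs neither lengthens $p$ or $q$ nor obstructs a connecting path). If $D$ has no cells the minimal cut can be taken of length $0$, which lies below the required bound. If $D$ has cells, take a cell $\Pi$ of maximal rank $k = r(D)$. By the contiguity condition built into $A$-maps, $\Pi$ admits a contiguity subdiagram $\Gamma$ to one of the contours, say $q$, with contiguity degree bounded below by a fixed small parameter; since $\Pi$ has maximal rank, $\Gamma$ contains no cell of rank $k$ compatible with the relevant arc of $q$, so by Lemma \ref{57} that arc is a smooth section and Lemma \ref{26} controls its length against the rest of the contour. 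Excising $\Pi$ together with $\Gamma$ yields an annular $A$-map $D'$ with strictly fewer cells and with $|p'| = |p|$ and $|q'| \le |q| + c\,|\partial\Pi|$ for a controlled constant $c$; the inductive connecting path in $D'$ is then extended back across $\Gamma$ and $\partial\Pi$ to reach $q$, adding only a boundedly short segment.

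The arithmetic of these successive additions is where the real work lies, and it is the step I expect to be the main obstacle. One must show that the length contributed at each stage forms a geometrically decaying series, governed by the maximal-rank reduction, whose total stays below $\gamma(|p|+|q|)$, and this is possible only because the small parameters $\alpha, \overline{\beta}, \gamma, \varepsilon, \dots$ of the $A$-map hierarchy satisfy the precise chain of inequalities imposed in the theory; the inequality $|t| < \gamma(|p|+|q|)$ is calibrated exactly to those choices and cannot be reached by crude estimates. Since every ingredient invoked here — the contiguity conditions, Lemma \ref{57}, and Lemma \ref{26} — holds verbatim for partial Burnside groups, and hence for $\mathbb{A}_{\mathcal{P}}$ under its minimal partial Burnside presentation \eqref{gc}, the argument transfers to our setting without change, which is precisely why we are entitled to quote it from \cite{Bo, O91}.
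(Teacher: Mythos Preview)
The paper does not prove this lemma at all: it is simply listed among the results ``proven in \cite{Bo} on the basis of the monograph \cite{O91}'' and invoked as a black box. So there is no ``paper's own proof'' to compare against; your final paragraph, where you conclude that the statement may be quoted directly for the partial Burnside setting, is exactly what the paper does and is all that is required here.

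The sketch you give before reaching that conclusion, however, is not a faithful reconstruction of the Ol'shanskii argument and has real problems if taken as a proof. Lemma~\ref{28} is a statement about $A$-maps, a combinatorial notion that makes no reference to labels, periods, or group presentations; your invocation of Lemma~\ref{57} (which speaks of $A$-periodic labels and compatibility with periods of rank $k$) is therefore out of place in the inductive step. The excision move is also not sound as stated: removing a maximal-rank cell $\Pi$ together with a contiguity subdiagram $\Gamma$ to $q$ from an annular map does not in general return an annular $A$-map --- the $A$-map axioms on the new boundary arc are not automatic, and the topology can degenerate to a disk. Ol'shanskii's actual proof of Lemma~17.1 proceeds instead by taking a geodesic cut $t$, observing that in the resulting disk the two copies of $t$ are smooth sections of rank $1$ (geodesics are smooth essentially by definition of condition $A1$), and then applying the disk estimate (your Lemma~\ref{26}) directly to bound $|t|$ against $|p|+|q|$; no cell-by-cell induction or contiguity excision is needed. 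Since you ultimately defer to the citation anyway, none of this affects the paper's logic, but the middle two paragraphs should be dropped rather than presented as an outline of the proof.
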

	
	The following statement follows directly from Lemma \ref{w}.
	\begin{lem}\label{geo2}
		Let \( V \) be a word over the alphabet \( S \) in the group $\mathbb{A}_\mathcal{P}$, and suppose that \( V \) is conjugate to some word \( W \) over the alphabet \( S' \). Assume that \( U \) is a shortest word such that \( V \) is conjugate to a power of \( U \) in $\mathbb{A}_\mathcal{P}$. Then the word \( U \) is a word over the alphabet \( S' \).
	\end{lem}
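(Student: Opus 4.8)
The plan is to reduce everything to Lemma~\ref{w} by exploiting the transitivity of conjugacy. The key observation is that, because $V$ is conjugate to $W$ in $\mathbb{A}_\mathcal{P}$, for every word $U$ and every integer $r$ the relation $V \sim U^r$ holds if and only if $W \sim U^r$. Hence the set of words $U$ for which $V$ is conjugate to some power of $U$ coincides exactly with the set of words $U$ for which $W$ is conjugate to some power of $U$. In particular, minimizing word length over these two identical sets yields the same minimizers, so the word $U$ from the hypothesis is simultaneously a shortest word whose power is conjugate to $V$ and a shortest word whose power is conjugate to $W$.

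Second, I would apply Lemma~\ref{w} directly to the word $W$. Since $U$ is a shortest word such that $W$ is conjugate to a power of $U$, the lemma guarantees that every letter $s \in S$ occurring in $U$ must already occur in $W$. But by hypothesis $W$ is a word over $S' = \{b_1^{\pm1}, b_2^{\pm1}\}$, so neither $a$ nor $a^{-1}$ occurs in $W$. Consequently $a^{\pm1}$ cannot occur in $U$, which means $U$ is a word over the alphabet $S'$, as claimed.

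The argument is essentially immediate, so I do not expect a serious obstacle; the only point requiring a little care is the first step, where one must be certain that passing from $V$ to the conjugate word $W$ changes neither the collection of eligible words $U$ nor their minimal length. This follows purely formally from the fact that conjugacy is an equivalence relation, giving $V \sim U^r \iff W \sim U^r$, and no diagram-theoretic input beyond Lemma~\ref{w} is needed.
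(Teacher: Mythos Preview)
Your proposal is correct and follows exactly the paper's own argument: one first observes that conjugacy of $V$ and $W$ makes $U$ simultaneously a shortest word whose power is conjugate to $W$, and then Lemma~\ref{w} applied to $W$ forces every letter of $U$ to lie in $S'$. There is nothing to add or change.
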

	
	\begin{proof}
		If \( U \) is the shortest word such that \( V \) is conjugate to a power of \( U \) in $\mathbb{A}_\mathcal{P}$, then \( U \) is also the shortest word such that \( W \) is conjugate to a power of \( U \) in $\mathbb{A}_\mathcal{P}$. By Lemma~\ref{w}, any letter \( s \in S \) occurring in \( U \) must also occur in \( W \). Since \( W \) contains only letters from the set \(S'= \{b_1^{\pm1}, b_2^{\pm1}\} \), it follows that \( U \) is also a word over the alphabet \( \{b_1^{\pm1}, b_2^{\pm1}\} \).
	\end{proof}
	
	\begin{lem}\label{geo3}
		Let \( V \) be a word over the alphabet \( S \), and suppose that \( V \) is conjugate to some word \( W \) over the alphabet \( S' \) in the group $\mathbb{A}_\mathcal{P}$. Then there exists a cyclic shift \( V_1 \) of \( V \) such that
		\[
		V_1 = T U^k T^{-1},
		\]
		where \( U \) is a word over the alphabet \( S' \), and the following inequalities hold:
		\[
		\overline{\beta}|U^k| < |V| \quad \text{and} \quad |T| < \gamma(|V| + |U^k|).
		\]
	\end{lem}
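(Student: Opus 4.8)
The plan is to realize the conjugacy of $V$ to a word over $S'$ by a reduced annular diagram and then read the two length bounds directly off the geometric estimates in Lemmas \ref{26} and \ref{28}. First I would dispose of the degenerate situations (the empty word, or a nonempty word representing the identity) by taking $U$ and $T$ empty, so assume $V$ represents a nontrivial element. Let $U$ be a shortest word for which $V$ is conjugate in $\mathbb{A}_\mathcal{P}$ to a power $U^k$; such a $U$ exists since, by Lemma \ref{49}, $V$ is conjugate to a power of a period or of a simple word. By Lemma \ref{geo2}, the hypothesis that $V$ is conjugate to the word $W$ over $S'$ forces $U$ to be a word over $S'$ as well. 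By minimality, $U$ is not conjugate in $\mathbb{A}_\mathcal{P}$ to a power of any shorter word, so up to a cyclic shift it is exactly a period of some rank $j$ or a word simple in rank $r$ --- the dichotomy of Lemma \ref{49}.

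Next I would take a reduced annular diagram $D$ over the presentation \eqref{gc} realizing this conjugacy, with contours $p$ and $q$, where $\text{Lab}(p)$ is a cyclic shift of $V$ and $\text{Lab}(q) = U^k$. The structural heart of the argument is that $q$ is a \emph{smooth} section: its label is $U$-periodic with $U$ a period (or simple word) of the appropriate rank, and the minimality of $U$ together with the reducedness of $D$ prevents cells of that rank from being $U$-compatible with $q$; hence Lemma \ref{57} applies and $q$ is smooth of rank $|U|$.

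Granting smoothness, the first inequality is immediate from Lemma \ref{26} applied with the smooth contour $q$ and the contour $p$: it gives $\overline{\beta}|q| \le |p|$, that is $\overline{\beta}|U^k| \le |V|$. Because $U^k$ is conjugate to the nontrivial element $V$ we have $|U^k| > 0$, so the equality case of Lemma \ref{26} is excluded and the bound is strict, $\overline{\beta}|U^k| < |V|$. For the second inequality I would invoke Lemma \ref{28} on the annular map $D$: there is a path $\tau$ joining a vertex of $p$ to a vertex of $q$ with $|\tau| < \gamma(|p| + |q|) = \gamma(|V| + |U^k|)$. Cutting $D$ along $\tau$ linearizes the diagram and yields the relation $V_1 = T U^k T^{-1}$ in $\mathbb{A}_\mathcal{P}$, where $T = \text{Lab}(\tau)$ and $V_1$ is the cyclic shift of $V$ read from the endpoint of $\tau$ on $p$; thus $|T| = |\tau| < \gamma(|V| + |U^k|)$, as required.

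The step I expect to be the main obstacle is the smoothness claim, i.e. checking the hypotheses of Lemma \ref{57} for $q$: one must be sure that the minimal $U$ is genuinely a period (or simple word) of a definite rank and that a reduced diagram realizing the conjugacy carries no cell of that rank which is $U$-compatible with $q$. This is precisely where the graded small-cancellation machinery of \cite{Bo} (following \cite{O91}) is used; once smoothness is established, the two quantitative estimates follow mechanically from Lemmas \ref{26} and \ref{28}.
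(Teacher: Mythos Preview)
Your proposal is correct and follows essentially the same route as the paper: choose a shortest $U$ whose power is conjugate to $V$, invoke Lemma~\ref{geo2} to put $U$ over $S'$, use Lemma~\ref{49} (together with Lemma~\ref{III}) to identify $U$ with a period or simple word, build a reduced annular diagram, obtain smoothness from Lemma~\ref{57}, and read off the two bounds from Lemmas~\ref{26} and~\ref{28}. The only cosmetic point you glossed over is that cutting along $\tau$ yields $T U_2^k T^{-1}$ with $U_2$ a cyclic shift of $U$, which the paper then absorbs back into $U$ since cyclic shifts of words over $S'$ remain over $S'$.
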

	
	\begin{proof}
		Let \( U_1 \) be the shortest word such that \( V \) is conjugate to a power of \( U_1 \) in $\mathbb{A}_\mathcal{P}$. By Lemma~\ref{geo2}, \( U_1 \) is a word over the alphabet \( \{b_1^{\pm1}, b_2^{\pm1}\} \). By Lemma~\ref{49}, the word \( U_1 \) is conjugate to \( U^t \) for some word \( U \), where either \( U \) is an elementary period of some rank or a word that is simple in every rank. In the first case, Lemma~\ref{III} implies \( |U| \le |U_1| \); in the second case, the inequality \( |U| \le |U_1| \) follows from the definition of a simple word. Since \( U_1 \) is the shortest word such that \( V \) is conjugate to a power of \( U_1 \), we must have \( |U| = |U_1| \). Therefore, by Lemma~\ref{geo2}, \( U \) is also a word over the alphabet \( \{b_1^{\pm1}, b_2^{\pm1}\} \).
		
		Consider a minimal annular diagram \( \Delta \) with boundary paths \( p \) and \( q \) labeled by \( U^k \) and \( V^{-1} \), respectively. By Lemma~\ref{57}, the path \( p \) is smooth. Thus, Lemma~\ref{26} yields:
		\[
		\overline{\beta}|p| = \overline{\beta}|U^k| < |V|.
		\]
		By Lemma~\ref{28}, the diagram \( \Delta \) can be cut along a path \( t \) from a point \( o_1 \) on \( p \) to a point \( o_2 \) on \( q \), producing a disc diagram with boundary label
		\[
		V_1 = T U_2^k T^{-1},
		\]
		where \( V_1 \) is a cyclic shift of \( V \), \( U_2 \) is a cyclic shift of \( U \), and \( T \) is the label of the path \( t \). Moreover,
		\[
		|T| = |t| < \gamma(|p| + |q|) = \gamma(|V| + |U^k|).
		\]
		Without loss of generality, we may assume that \( U_2 = U \), since any cyclic shift of a word over \( S' \) is still a word over \( S' \).
	\end{proof}
	
	Let \( B_{\mathbb{A}_\mathcal{P}}(r) \) denote the ball of radius \( r \) in the Cayley graph of $\mathbb{A}_\mathcal{P}$ with respect to the generating set \(S \), and let \( B_\mathbb{F}(r) \) denote the ball of radius \( r \) in the Cayley graph of \( \mathbb{F} \) with respect to its generating set \( S' \).
	
	From Lemma \ref{geo3} it follows
	\begin{lem}\label{geo4}
		Let \( V \) be a word over the alphabet \( S^{\pm1} \) in the group $\mathbb{A}_\mathcal{P}$, and suppose that \( V \) is conjugate to a word \( W \) over the alphabet \( S' \). Then
		\[
		V = X T U^k T^{-1} X^{-1},
		\]
		where:
		\begin{itemize}
			\item \( U^k \) represents an element \( u \in B_\mathbb{F}\left( \tfrac{1}{\overline{\beta}} |V| \right) \),
			\item \( T \) represents an element \( z \in B_{\mathbb{A}_\mathcal{P}}(\gamma(1+\tfrac{1}{\overline{\beta}})|V|) \),
			\item \( X \) is a prefix or suffix of the word \( V \).
		\end{itemize}
	\end{lem}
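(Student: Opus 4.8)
The plan is to derive this lemma directly from Lemma~\ref{geo3} by tracking the cyclic shift explicitly; no new geometric input is needed. First I would apply Lemma~\ref{geo3} to the word $V$. Since $V$ is conjugate to a word $W$ over $S'$, the lemma furnishes a cyclic shift $V_1$ of $V$ together with a decomposition $V_1 = T U^k T^{-1}$, where $U$ is a word over $S'$, and the two length estimates $\overline{\beta}|U^k| < |V|$ and $|T| < \gamma(|V| + |U^k|)$ hold.

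Next I would recover $V$ itself from its cyclic shift. Writing $V = PQ$ as a concatenation of a prefix $P$ and a suffix $Q$ chosen so that $V_1 = QP$, one has the elementary identity $V = P V_1 P^{-1}$, where $P$ is a prefix of $V$ (the degenerate case $V_1 = V$ corresponds to $P$ empty). Setting $X := P$ and substituting the decomposition of $V_1$ gives $V = X T U^k T^{-1} X^{-1}$, which is exactly the required form, and by construction $X$ is a prefix of $V$, as claimed.

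It then remains to convert the two length bounds into the stated ball-membership assertions. From $\overline{\beta}|U^k| < |V|$ I get $|U^k| < \tfrac{1}{\overline{\beta}}|V|$; since $U^k$ is a word over $S'$, it represents an element $u$ of $\mathbb{F}$ that lies in $B_\mathbb{F}(\tfrac{1}{\overline{\beta}}|V|)$. Here I invoke the canonical embedding $\mathbb{F} \hookrightarrow \mathbb{A}_\mathcal{P}$ of Lemma~\ref{l1} in order to regard $u$ simultaneously as an element of $\mathbb{F}$ and of $\mathbb{A}_\mathcal{P}$, which is what licenses the use of the smaller ball $B_\mathbb{F}$ rather than $B_{\mathbb{A}_\mathcal{P}}$. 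Finally, substituting $|U^k| < \tfrac{1}{\overline{\beta}}|V|$ into the second estimate yields $|T| < \gamma\bigl(|V| + \tfrac{1}{\overline{\beta}}|V|\bigr) = \gamma\bigl(1 + \tfrac{1}{\overline{\beta}}\bigr)|V|$, so $T$ represents an element $z \in B_{\mathbb{A}_\mathcal{P}}\bigl(\gamma(1+\tfrac{1}{\overline{\beta}})|V|\bigr)$.

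The whole argument is essentially bookkeeping, and I expect no genuine obstacle. The only two points requiring a moment's care are the passage from the cyclic shift $V_1$ back to $V$ via conjugation by the prefix $X$ (which accounts for the outer conjugator $X\,(\cdot)\,X^{-1}$ absent from Lemma~\ref{geo3}), and the observation that $U^k$ must be read \emph{inside} the subgroup $\mathbb{F}$, not merely inside $\mathbb{A}_\mathcal{P}$, so that the first bullet genuinely records membership in $B_\mathbb{F}$.
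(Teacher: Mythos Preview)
Your proposal is correct and is exactly the approach the paper takes: the paper simply states that Lemma~\ref{geo4} follows from Lemma~\ref{geo3}, and you have supplied the routine details (undoing the cyclic shift by conjugating with a prefix $X$ of $V$, and converting the two length inequalities into the ball-membership statements).
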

	
	We use the least parameter principle (LPP) (see pages 165--166 \cite{O91} or pages 8--9 \cite{Bo}). Taking into account that a word of length \( r \) has exactly \( r \) cyclic shifts, Lemma~\ref{geo4} implies the following result:
	
	\begin{lem}\label{geo5}
		Let \( d_r \) be the number of words \( V \) of length \( \le r \) over the alphabet \( S \) in the group $\mathbb{A}_\mathcal{P}$ that are conjugate to some word \( W \) over the alphabet \( S' \). Then
		\[
		d_r < r \cdot |B_\mathbb{F}\left( \tfrac{1}{\overline{\beta}} r \right)| \cdot |B_{\mathbb{A}_\mathcal{P}}(3\gamma r)|.
		\]
	\end{lem}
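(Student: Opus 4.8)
The plan is to convert the structural decomposition of Lemma~\ref{geo4} into a product bound by counting, for each counted element, the admissible data that determine it. By Lemma~\ref{geo4}, every element $V$ of length $\le r$ over $S$ that is conjugate to a word over $S'$ can be written as $V = XTU^kT^{-1}X^{-1}$, so $V$ is completely determined by the triple $(X, U^k, T)$. I would bound the number of possibilities for each of the three components separately and then multiply.

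For the component $U^k$: it represents an element $u \in B_\mathbb{F}(\tfrac1{\overline{\beta}}|V|) \subseteq B_\mathbb{F}(\tfrac1{\overline{\beta}}r)$, so fixing a geodesic representative gives at most $|B_\mathbb{F}(\tfrac1{\overline{\beta}}r)|$ choices. For the component $T$: it represents an element $z \in B_{\mathbb{A}_\mathcal{P}}(\gamma(1+\tfrac1{\overline{\beta}})|V|)$, and since $\overline{\beta}$ is close to $1$ in the standard parameter choice, so that $\gamma(1+\tfrac1{\overline{\beta}})\le 3\gamma$, this ball is contained in $B_{\mathbb{A}_\mathcal{P}}(3\gamma r)$, giving at most $|B_{\mathbb{A}_\mathcal{P}}(3\gamma r)|$ choices. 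For the component $X$: writing $V = XV_1X^{-1}$ with $V_1 = TU^kT^{-1}$ a cyclic shift of $V$, the word $V$ is recovered from $V_1$ by an inverse cyclic shift, and a word of length $\le r$ has exactly $r$ cyclic shifts; hence once $V_1$ is fixed there are at most $r$ choices for $X$. Multiplying the three factors yields $d_r \le r\cdot|B_\mathbb{F}(\tfrac1{\overline{\beta}}r)|\cdot|B_{\mathbb{A}_\mathcal{P}}(3\gamma r)|$, and the strict length inequalities $\overline{\beta}|U^k|<|V|$ and $|T|<\gamma(|V|+|U^k|)$ supplied by Lemmas~\ref{geo3}--\ref{geo4} upgrade this to the strict inequality claimed.

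The step I expect to be the main obstacle is ensuring that this really is an upper bound and not an undercount in disguise, i.e. that the assignment of each counted element to its data is consistent and injective, so that distinct counted elements cannot collapse onto the same triple. The resolution is the least parameter principle: applying LPP to select, for every counted element, the representative that minimizes the relevant length parameter fixes a canonical word $V$ and hence canonical pieces $U^k$ and $T$; fixing geodesic representatives for $u$ and $z$ then makes $V_1 = TU^kT^{-1}$ a well-defined word, and letting the cyclic-shift index range over its at most $r$ values reconstructs $V$ uniquely from the data $(\text{shift index}, u, z)$. This injectivity is exactly what converts the three factor-bounds into the claimed product estimate.
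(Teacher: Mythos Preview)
Your decomposition-and-count argument via Lemma~\ref{geo4}---bound the choices for $u$, for $z$, and for the cyclic-shift index, then multiply---is exactly the paper's approach; its entire proof of Lemma~\ref{geo5} is the sentence preceding the statement, invoking Lemma~\ref{geo4} together with the fact that a word of length $r$ has $r$ cyclic shifts.

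Where you go wrong is the last paragraph. The least parameter principle is \emph{not} a device for selecting canonical word representatives of group elements; it is the convention (see \cite{O91}, \cite{Bo}) that the small parameters $\alpha,\beta,\gamma,\dots$ of the graded-presentation machinery are chosen in order, each small enough relative to its predecessors to satisfy the finitely many inequalities that arise. You already used it correctly in your second paragraph when you passed from $\gamma(1+\overline\beta^{-1})$ to $3\gamma$ (this needs only $\overline\beta\ge\tfrac12$, i.e.\ $\beta$ small), and that is the only role LPP plays in the paper at this point. Your proposed reconstruction of $V_1$ as a word from geodesic representatives of $u$ and $z$ does not work: the equality $V_1=TU^kT^{-1}$ of Lemma~\ref{geo3} holds in $\mathbb A_{\mathcal P}$, not letter-by-letter, so replacing $T$ and $U^k$ by geodesic words produces something equal to $V_1$ in the group but not a word of which $V$ is a literal cyclic shift, and the ``injectivity'' you want does not follow. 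The paper does not engage with this concern at all; it simply records the triple count as an overcount and moves on, so you should drop the LPP-based injectivity justification rather than try to rescue it.
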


	\section{On the growth of the free Burnside group of rank 3}\label{4}
	Let $G$ be a group generated by a finite set $S$. The growth function of $G$ with respect to $S$, denoted $\gamma_{G,S}(s)$, is the number of elements $g\in G$ that can be represented as a product of at most $s$ generators from $S$ or their inverses. By $\gamma_{B(m,n)}(s)$ and $\gamma_{F_m}(s)$ we denote the growth functions of the free Burnside group $B(m,n)$ and absolutely free group $F_m$ of rank $m$ with respect to its free generators.
	
	It is a well-established result by S.I. Adian that for odd $n\geq 665$ and $m>1$, the free Burnside group $B(m,n)$ has exponential growth \cite[Ch. VI, Theorem 2.15]{A}. Moreover, this growth is uniform, a stronger property that holds for all finite generating sets (see \cite{A09}, \cite{A109}, \cite{AA15}).
	
	For the main result of this paper, we require an explicit lower bound on the growth function of $B(3,n)$. This can be derived from a general estimate established in \cite{Bay}. The main theorem of that paper provides a lower bound for the growth function of $B(m,n)$ for any rank $m\geq 2$. Specifically, it is shown that
	\[
	\gamma_{B(m,n)}(s)> \frac{m}{m-1} (2m-1-\alpha)^{s}-1
	\]
	for any $\alpha\in (0,1)$ satisfying the condition $2m(2m-1)<\alpha(2m-1-\alpha)^8$. 
	
	Substituting $m=3$ and $\alpha=0.1$ into the inequality yields the specific bound we need. This leads to the following lemma.
	
	\begin{lem}[see Theorem 1, \cite{Bay}]\label{GrowthEstimate}
		For all odd $n\geq 665$ and natural $s$ 
		\[
		\gamma_{B(3,n)} (s)> \frac{3}{2} (4.9)^{s}-1.
		\]
	\end{lem}
	
	\section{Proof of Theorem \ref{T}}\label{5}
	
	\paragraph{Proof of Statement 2 of Theorem \ref{T}.} The identity $x^n = 1$ does not hold in the group $\mathbb{A}_\mathcal{P}$, since it fails in its subgroup $\mathbb{F}$. 
	
	\paragraph{Proof of Statement 1 of Theorem \ref{T}.} Let us prove that $x^n = 1$ is an $M$-probabilistic identity in the group $\mathbb{A}_\mathcal{P}$.

	We can estimate $\gamma_{\mathbb{A}_\mathcal{P}, S}(r)$ and $\gamma_{\mathbb{F}, S'}(r)$ from above using the growths $\gamma_{F_3}(r)$ and $\gamma_{F_2}(r)$ of the free groups $F_3$ and $F_2$ of rank 3 and 2 with respect to its free generators:
	\begin{equation}\label{3}
		\gamma_{\mathbb{A}_\mathcal{P}, S}(r) \leq \gamma_{F_3}(r) = 1+\frac{3(5^{r} - 1)}{2} < \frac{3}{2}\cdot 5^r,
	\end{equation}
	\begin{equation}\label{2}
		\gamma_{\mathbb{F}, S'}(r) \leq \gamma_{F_2}(r) = 2\cdot3^r-1 < 2\cdot 3^r.
	\end{equation}

	On the other hand, $\mathbb{A}_\mathcal{P}$ admits a homomorphism onto the free Burnside group $B(3, n)$, since $\mathbb{A}_\mathcal{P}$ is a 3-generator $n$-torsion group (see \eqref{e1}). Therefore, $\gamma_{\mathbb{A}_\mathcal{P}}(r)$ is no less than $\gamma_{B(3, n)}(r)$. Using the estimate for $\gamma_{B(3, n)}$ obtained in Section \ref{4}, we have:
	\[
	\frac{3}{2} \cdot (4.9)^{r}-1 <\gamma_{\mathbb{A}_\mathcal{P}, S}(r) < \frac{3}{2}\cdot 5^r. 
	\]
	
	From Lemma \ref{geo4} we immediately get the following estimate for the number $d_r$ of words $V$ of length $\le r$ in alphabet $S$ that are conjugate in the group $\mathbb{A}_\mathcal{P}$ to some word $W$ in alphabet $S'$:
	\[d_r\le r\cdot\gamma_{\mathbb{F}, S'}(\overline{\beta}^{-1}r)\cdot\gamma_{\mathbb{A}_\mathcal{P},S}(3\gamma r).\]
	
	Taking into account \eqref{3} and \eqref{2}, we obtain:
	\[
	d_r<r\cdot2\cdot 3^{\overline{\beta}^{-1}r}\cdot\frac{3}{2}\cdot 5^{3\gamma r}= 3r(3^{\overline{\beta}^{-1}}5^{3\gamma})^r.
	\]
	
	Using the least parameter principle (LPP), we choose the parameters $\beta$ and $\gamma$ small enough that the inequality \[3^{\overline{\beta}^{-1}}5^{3\gamma}<4\] holds (recall that $\overline{\beta}=1-\beta$).
	
	Then \[\frac{d_r}{\gamma_{\mathbb{A}_\mathcal{P}, S}(r)}<\frac{3\cdot r\cdot4^r}{\frac{3}{2} \cdot (4.9)^{r}-1}\xrightarrow[r\to\infty]{} 0.\]
	
	Thus, Statement 1 of Theorem \ref{T} is proven.

	\paragraph{Proof of Statement 3 of Theorem \ref{T}.}\label{6}
	
	Suppose that for some $ \mathcal{P} \neq \mathcal{P}'$, the groups $ \mathcal{A}_\mathcal{P} $ and $ \mathcal{A}_\mathcal{P'} $ are isomorphic. Then, by Lemma 5, the group $ \mathcal{A}_\mathcal{P} $ contains a subgroup that is isomorphic to $ \mathbb{F}(n, \mathcal{P}') $. In other words, $ \mathcal{A}_\mathcal{P} $ contains two elements $ u'_1, u'_2 $ that generate a subgroup isomorphic to $ \mathbb{F}(n, \mathcal{P}') $. Moreover, since for $ \mathcal{P}' \neq \mathcal{P}'' $, the groups $ \mathbb{F}(n, \mathcal{P}') $ and $\mathbb{F}(n, \mathcal{P}'') $ are not isomorphic, the corresponding pairs of generators $ (u'_1, u'_2) $ and $ (u''_1, u''_2) $ are distinct ($ \langle u'_1, u'_2 \rangle \simeq \mathbb{F}(n, \mathcal{P}') $, $ \langle u''_1, u''_2 \rangle \simeq \mathbb{F}(n, \mathcal{P}'') $). In the countable group $ \mathcal{A}_\mathcal{P} $ the set of distinct pairs of elements $ (u'_1, u'_2) $ is countable. Therefore, the set of groups isomorphic to the given group $ \mathcal{A}_\mathcal{P} $ is at most countable. Since the set of non-isomorphic groups $ \mathbb{F}(n, \mathcal{P}) $ for various subsets $ \mathcal{P} $ of the prime numbers set is uncountable (see \cite[Chapter VII]{A}, Proposition 2.17), it follows that the set of non-isomorphic groups $ \mathcal{A}_\mathcal{P} $ for different $ \mathcal{P} $ is also uncountable.
	
	\section*{Acknowledgements}
	The work of V.S. Atabekyan and V.H.Mikaelian is supported by the
	Higher Education and Science Committee of RA, research project no. 25RG-1A187, the work of A.A. Bayramyan is supported by the Higher Education and Science Committee of RA, research project no. 23AA-1A028.

\end{document}